\newcommand{\ra}[1]{\overleftarrow{#1}}
\newcommand{\overbar}[1]{\overline{#1}}
\newtheorem{lem}{Lemma}
\newtheorem{defi}[lem]{Definition}
\newtheorem{pro}[lem]{Proposition}
\newtheorem{theo}[lem]{Theorem}
\newtheorem{cor}[lem]{Corollary}
\newtheorem{rem}[lem]{Remark}
\newcommand{\LET}{{\sf LastExitTree}}
\newcommand{\FET}{{\sf FirstEntranceTree}}
\def \bls{{\tiny $\blacksquare$ \normalsize }}
\def \sign{{\sf sign}}
\def \deg#1#2{{\sf Deg^{in}}_{#1}(#2)}
\def \degree{{\sf deg}}
\newcommand{\HC}{{\sf HeapCol}}
\newcommand{\OUTIN}{{\sf OUTIN}}
\DeclareMathOperator{\Id}{Id}
\def \1{\textbf{1}}
\def \Z{\mathbb{Z}}
\def \SP{\textsf{SpanningTrees}}
\def \HoleSet{{\sf HoleSet}}
\def \tb{{\tiny $\bullet$}}
\def \bpar#1{\left\{\begin{array}{#1} }
\def \epar { \end{array}\right.}
\newcommand{\Nb}{{\sf Nb}}
\newcommand{\Nbb}{{\sf Nb}}
\def \Paths{{\sf Paths}}
\def \app#1#2#3#4#5{\begin{array}{rccl} #1:&#2&\longrightarrow&#3\\ &#4&\longmapsto&#5\end{array}}
\def \Heap{{\sf Heap}}
\def \ba{\begin{align}}
\def \ea{\end{align}}
\def \be{\begin{eqnarray*}}
\def \ee{\end{eqnarray*}}
\def \ben{\begin{eqnarray}}
\def \een{\end{eqnarray}}
\def \beq{\begin{equation}}
\def \eq{\end{equation}}
\def \build#1#2#3{\mathrel{\mathop{\kern 0pt#1}\limits_{#2}^{#3}}}
\def \ba{{\bf a}}
\def \eref#1{(\ref{#1})}
\def \P{{\mathbb{P}}}
\def \imp{\Rightarrow}
\def \l{\left}
\def \r{\right}
\def \sous#1#2{\mathrel{\mathop{\kern 0pt#1}\limits_{#2}}}
\def \sur#1#2{\mathrel{\mathop{\kern 0pt#1}\limits^{#2}}}
\def \eqd{\sur{=}{(d)}}
\newcommand{\compact}{ \topsep0pt   \itemsep=0pt   \partopsep=0pt   \parsep=0pt}
\newcounter{c}
\def \bir{\begin{itemize}\compact \setcounter{c}{0}}
\def \itr{\addtocounter{c}{1}\item[($\roman{c}$)]} 
\def \eir{\end{itemize}\vspace{-2em}~}
\newcounter{d}
\def \bia{\begin{itemize}\compact \setcounter{d}{0}}
\def \eia{\end{itemize}\vspace{-2em}~}
\newcounter{b}
\def \bi{\begin{itemize}\compact \setcounter{b}{0}}
\def \ei{\end{itemize}\vspace{-2em}~}
\tikzset{
	treenode/.style = {align=center, inner sep=0pt, text centered, font=\sffamily},
	arn_n/.style = {treenode, circle, white, draw=black,fill=black, text width=0.5em},
	arn_r/.style = {treenode, circle, white, draw=black, fill=red, text width=0.5em},
	arn_x/.style = {treenode, circle, white, draw=black, fill=white, text width=0.5em}
}
\begin{document}
\newcommand{\Torus}[1]{{\sf Torus}(#1)}
\newcommand{\cv}[1]{|#1|}
\newcommand{\W}{{\sf Weight}}
\renewcommand{\L}{{\sf Length}}

\def \ligne{\centerline{------------------------------}\\}

\newcounter{Mod}
\setcounter{Mod}{0}
\renewcommand{\theMod}{\Alph{Mod}}
\newcommand{\Modref}[1]{(\ref{#1})}

\newcommand \NewModel[3]{\refstepcounter{Mod}
\begin{mdframed}[topline=false,rightline=false,bottomline=false,linewidth=2pt]
\noindent {\bf Definition of Model \theMod :}  {\sf #1}.\label{#2}~\\#3
\end{mdframed}
}

\newcounter{SMod}
\setcounter{SMod}{0}
\renewcommand{\theSMod}{\Alph{SMod}}
\newcommand{\SModref}[1]{(\ref{#1})}

\newcommand\NewSModel[3]{\refstepcounter{SMod}
\begin{mdframed}[topline=false,rightline=false,bottomline=false,linewidth=2pt]
\noindent {\bf Definition of Subtree of tree Model \theSMod:}  {\sf #1}.\label{#2}~\\#3
\end{mdframed}}

\newcounter{Ker}
\setcounter{Ker}{0}
\renewcommand{\theKer}{\Alph{Ker}}
\newcommand{\Kerref}[1]{(\ref{#1})}

\newcommand \NewKernel[3]
{\refstepcounter{Ker}
\begin{mdframed}[topline=false,rightline=false,bottomline=false,linewidth=2pt]
\noindent{\bf Definition of the kernel $K^{(\theKer)}$ :} {\sf #1}\label{#2}.\\#3
\end{mdframed}}

\begin{center}
\huge\bf
A combinatorial proof of Aldous--Broder theorem for general Markov chains~\\
{\large \textbf{\bf Luis Fredes$^{\dagger}$ and Jean-Fran\c{c}ois Marckert$^{*}$}}
\rm \\
\large{$^\dagger$Universit\'e Paris-Saclay.\\
	$^{*}$CNRS, LaBRI, Universit\'e Bordeaux}
\normalsize 
\end{center}

\begin{abstract} Aldous--Broder algorithm is a famous algorithm used to sample a uniform spanning tree of any finite connected graph $G$, but it is more general: given an irreducible and reversible Markov chain $M$ on $G$ started at $r$, the tree rooted at $r$ formed by the first entrance steps in each node (different from the root) has a probability proportional to $\prod_{e=(e^{-},e^+)\in {\sf Edges}(t,r)} M_{e^{-},e^+}$, where the edges are directed toward $r$. In this paper we give proofs of Aldous--Broder theorem in the general case, where the kernel $M$ is irreducible but not assumed to be reversible (this generalized version appeared recently in  Hu, Lyons and Tang \cite{HLT}).\par 
We provide two new proofs: an adaptation of the classical argument, which is purely probabilistic, and a new proof based on combinatorial arguments. On the way we introduce a new combinatorial object that we call the golf sequences.
              \end{abstract}

\subsection*{Acknowledgments} The first author acknowledge support from ERC 740943 \emph{GeoBrown}.

\section{Introduction}

\label{sec:intro} Consider $G=(V,E)$ a finite connected graph: $V$ is the set of vertices, and $E$ the set of undirected edges. A path is a sequence of vertices  $w=(w_k, 0\leq k \leq m)$, with the property  that $\{w_k,w_{k+1}\}\in E$ for every $k$.
This path is said to be covering if it visits all the vertices and if $m$ is the first time with this property. More formally, for every $1\leq k \leq |V|$ define
\be
\tau_k(w) = \inf\{j, |\{w_0,\cdots,w_j\}|=k\},~~~1\leq k \leq |V|,\ee
the first time the path has visited $k$ different points (we write $\tau_k$ instead of $\tau_k(w)$ when it is clear from the context). The path $w$ is then called covering if $\tau_{|V|}(w)=m$. 
\par
Denote by $\SP(G)$ the set of spanning trees $t$ of $G$, and by $\SP^{\bullet}(G)$ the set of rooted spanning trees $(t,r)$, where $r$, the root, is a distinguished vertex of $V$. Each tree $(t,r)$ is considered as a directed graph in which the edges are directed toward the root: for any $(e^{-},e^+)$ in the edge set $E(t,r)$, $e^+$ is the parent of $e^{-}$ in $t$ (we write $e^+=p(e^{-})$).
\begin{defi}\label{defi:dqsd} For a covering path $w$, denote by $\FET(w)$ the rooted spanning tree $(t,w_0)$ whose $|V|-1$ edges are $(w_{\tau_k},w_{\tau_k-1})$ (that is oriented towards the root $w_0$) for $2\leq k \leq \tau_{|V|}$.
\end{defi}
\noindent Thus, each edge corresponds to the first edge used to visit each vertex different from $w_0$, turned around.
\par
A Markov kernel $M=(M_{a,b},a,b \in V)$ on $V$ is said to be {\it positive on } $G$ if $M_{a,b}>0 \iff \{a,b\}\in E$.
Any Markov chain driven by such a Markov kernel can traverse each edge of $G$ both ways, and each step of such a chain corresponds to an edge of $G$. On a connected graph, as it is the case here, this chain is irreducible, therefore it has a unique invariant distribution, $\rho=(\rho_v,v\in V)$.\par
A model of random covering paths is obtained by killing a Markov chain $W$ at the cover time.
\begin{theo}\label{theo:AB}[Aldous--Broder, \cite{Al90} and \cite{Bro89}]Let $W$ be a Markov chain with positive and \textbf{reversible} kernel $M$ and invariant distribution $\rho$. Then, for any $(t,r)\in \SP^{\bullet}(G)$
  \ben\label{eq:qsdq17}
  \P\l[\FET\l(W_0,\cdots,W_{\tau_{|V|}}\r)=(t,r)~|~W_0=r\r]= {\sf Const.}\l[\prod_{e\in E(t,r)} M_e\r]/\rho(r).\een
\end{theo}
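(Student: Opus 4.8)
The plan is to turn the stopped--chain probability into a finite sum over covering paths, reverse each path using reversibility, and then evaluate the reversed sum by a last--exit decomposition. First I would record the elementary reduction: conditioned on $W_0=r$, the chain follows a prescribed covering path $w=(w_0=r,\dots,w_m)$ (with $m=\tau_{|V|}(w)$) with probability exactly $\prod_{k=0}^{m-1}M_{w_k,w_{k+1}}$, so that
\[
\P\big[\FET(W_0,\dots,W_{\tau_{|V|}})=(t,r)\mid W_0=r\big]=\sum_{w}\ \prod_{k=0}^{m-1}M_{w_k,w_{k+1}},
\]
the sum being over all covering paths $w$ started at $r$ with $\FET(w)=(t,r)$.

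The next step is a reversal. I would prove the combinatorial lemma that for a covering path $w$ its reverse $\ra{w}=(w_m,\dots,w_0)$ satisfies $\FET(w)=\LET(\ra{w})$, the last--exit tree of $\ra{w}$ rooted at its endpoint $r$: the first entrance of a vertex $v$ in $w$, at time $\tau$, becomes the last exit of $v$ in $\ra{w}$ and carries the very same oriented edge $w_\tau\to w_{\tau-1}$. Reversibility gives the telescoping identity $\rho_r\prod_{k}M_{w_k,w_{k+1}}=\rho_{w_m}\prod_{k}M_{w_{k+1},w_k}$, and reindexing by $u=\ra{w}$ turns the reversed product into the forward weight of $u$, yielding
\[
\P\big[\FET=(t,r)\mid W_0=r\big]=\frac{1}{\rho_r}\sum_{u}\rho_{u_0}\prod_{j}M_{u_j,u_{j+1}},
\]
where $u$ ranges over paths ending at $r$ that visit all of $V$, whose starting vertex is visited only at time $0$, and whose last--exit tree is $(t,r)$; each summand is the stationary path probability $\P_\rho[(W_0,\dots,W_m)=u]$. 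It is worth highlighting why reversibility is unavoidable here: the \emph{forward} walk enters each $v$ through its parent, producing weights $M_{p(v),v}$ (parent to child), whereas the product $\prod_{e\in E(t,r)}M_e=\prod_{v\neq r}M_{v,p(v)}$ of the theorem runs child to parent; reversibility is precisely the global mechanism that interchanges the two orientations while absorbing the $\rho$--factors.

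Finally I would evaluate the reversed sum by cutting each $u$ at its successive last--exit times. Ordering $V\setminus\{r\}$ by last--exit time as $v_{(1)},\dots,v_{(n-1)}$ gives a linear extension of the tree order (each child exits before its parent), with $v_{(1)}=u_0$; the step leaving $v_{(i)}$ at its last exit is exactly the tree edge $v_{(i)}\to p(v_{(i)})$, so these steps contribute the factor $\prod_{e\in E(t,r)}M_e$, while between consecutive last exits the walk performs an excursion confined to the not--yet--exhausted vertices. Summing the excursion weights over all admissible paths, and over all linear extensions of the tree order, produces a factor which I claim equals a constant independent of $(t,r)$; collecting this common value into ${\sf Const}$ gives the stated formula. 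The main obstacle is exactly this last point: establishing that the excursion (Green's--function--type) sum collapses to a $t$--independent constant once the $\rho$'s are telescoped via reversibility. This is the combinatorial heart of the argument — essentially the Markov--chain--tree mechanism refined to a single tree — and I would treat it as the central lemma rather than a routine calculation.
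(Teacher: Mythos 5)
Your first two steps are sound and coincide with the set-up of the paper's probabilistic proof in Section \ref{sec:qdqsd}: the decomposition of the conditional probability as a sum over covering paths killed at the cover time, the identity $\FET(w)=\LET(\ra{w})$ (this is Lemma \ref{lem:qsdqs}), and the telescoping $\rho_{w_0}\prod_k M_{w_k,w_{k+1}}=\rho_{w_m}\prod_k M_{w_{k+1},w_k}$ supplied by reversibility are all correct, and they do reduce the theorem to showing that $\sum_u \rho_{u_0}\prod_j M_{u_j,u_{j+1}}$, taken over paths $u$ ending at $r$ whose last-exit tree is $(t,r)$ and whose starting point is visited only at time $0$, equals ${\sf Const.}\prod_{e\in E(t,r)}M_e$. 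That target is exactly the content of Lemma \ref{lem:shds} (with $\ra{M}=M$).

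However, your proof stops precisely where the theorem becomes nontrivial. The assertion that the ``excursion (Green's-function-type) sum collapses to a $t$-independent constant'' is not proved; it is announced and deferred to a ``central lemma'' that is never stated or established. Concretely, after cutting $u$ at the successive last-exit times you are left with a sum, over all linear extensions $v_{(1)},\dots,v_{(n-1)}$ of the tree order, of terms $\rho_{v_{(1)}}\prod_i G_{\{v_{(1)},\dots,v_{(i)}\}}\bigl(p(v_{(i)}),v_{(i+1)}\bigr)$ involving restricted Green's functions, and the independence of this quantity from $(t,r)$ is essentially equivalent to the theorem itself; nothing in your argument makes it more tractable than the original statement. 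This is the ``direct approach'' (summing over the orders in which vertices are reached, or here, last exited) that the introduction of the paper explicitly flags as seemingly too difficult to complete, and it is the reason both proofs in the paper take a detour: the probabilistic proof replaces the excursion computation by the observation that $(\LET(Y_k,k\le m))_{m\ge 0}$ is a stationary Markov chain on $\SP^{\bullet}(G)$ whose balance equation is satisfied by the measure proportional to $\prod_{e}\ra{M}_e$ (proof of Lemma \ref{lem:shds}), while the combinatorial proof replaces it by the golf-sequence and heaps-of-cycles decomposition together with the inversion formula \eref{eq:qdq2}. To repair your argument you would need to supply one of these mechanisms, or an actual proof of your excursion claim.
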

To be totally clear, each $e$ is an edge of the type $(e^{-},e^+)$, where $e^+=p(e^{-})$, and $M_e=M_{e^{-},e^+}$.\medbreak
Elements around Aldous--Broder algorithm can be found in \cite{ Jarai09}.\medbreak

As a consequence if $M$ is the Markov kernel corresponding to the simple random walk on $G$, $M_{a,b}=1_{\{a,b\}\in E}/\degree_G(a)$, and since the invariant distribution of this simple random walk $\rho_v$ is proportional to $\degree_G(v)$, the r.h.s.\ of \eref{eq:qsdq17} is proportional to $\prod_{u\in V}1/\degree_G(u)$, that is independent of $t$, so that $\FET(W_0,\cdots,W_{\tau_{|V|}})$ is a uniform spanning tree\footnote{To conclude here, another argument is needed: the fact that the support of  $\FET(W_0,\cdots,W_{\tau_{|V|}})$ is the set of all spanning trees, which is easy to see}, rooted at $W_0=r$.

As we will see, Aldous--Broder theorem \textbf{does not hold} if we remove the reversibility condition. \medbreak 

Recall that if $X=(X_i,i\in \Z)$ is a Markov chain with positive kernel $M$ with invariant distribution $\rho$, under its stationary regime, then the time reversal of $X$ is also a Markov chain under its stationary regime, with same invariant distribution, and   with kernel
 \ben\label{eq:dqsd22}
 \ra{M}_{x,y}:= \rho_{y} M_{y,x} / \rho_x, \textrm{ for all }(x,y)\in V^2.
 \een
The next result extends Aldous--Broder theorem:
 \begin{theo}\label{theo:kgyqsd} Let $W$ be a Markov chain with positive kernel $M$ (\textbf{reversible or not}) with invariant distribution $\rho$.  Then, for any $(t,r)\in \SP^{\bullet}(G)$
\ben\label{eq:qsdq33}
\P\l[\FET\l(W_0,\cdots,W_{\tau_{|V|}}\r)=(t,r)~|~W_0=r\r]= {\sf Const.}\left[\prod_{e\in E(t,r)} \ra{M_e}\right]/\rho(r).
\een
\end{theo}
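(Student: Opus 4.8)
The plan is to adapt the classical probabilistic argument, whose engine is the \emph{last-exit tree} process of a stationary chain, coupled with a deterministic time-reversal identity. Throughout I use that a finite irreducible chain is recurrent, so every "last visit" time is almost surely finite.

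\textbf{A deterministic reversal identity.} For a path $w=(w_0,\dots,w_m)$ let $\ra{w}=(w_m,\dots,w_0)$ be its reversal, and define the last-exit tree $\LET(w)$, rooted at $w_m$, by orienting each $v\neq w_m$ toward $w_{s+1}$, where $s=\max\{i:w_i=v\}$ is the last time $w$ sits at $v$. The first thing I would check is the elementary identity $\FET(w)=\LET(\ra{w})$, valid for every covering path: the first entrance of $w$ into $v$ at time $\tau$ is exactly the last visit of $\ra{w}$ to $v$, and the two constructions record the same oriented edge $(v,w_{\tau-1})$. Both trees are rooted at $w_0$.

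\textbf{The last-exit tree Markov chain and its stationary law.} Run a bi-infinite stationary chain $X=(X_n)_{n\in\Z}$ with a positive kernel $N$ and invariant law $\rho$, and let $L_n$ be the last-exit tree of $(\dots,X_{n-1},X_n)$ rooted at $X_n$. Then $(X_n,L_n)$ is itself a Markov chain whose transition is the Aldous--Broder move: from $(r,t)$ draw $r'$ with probability $N_{r,r'}$, orient the new edge $r\to r'$, delete the outgoing edge of $r'$, and re-root at $r'$. The central computation is to show that $\pi(r,t)\propto\prod_{e\in E(t,r)}N_e$ is stationary. I would verify the balance equation directly: the predecessors of $(r',t')$ are obtained by choosing a child $r$ of the root $r'$ in $t'$ and a reconnection vertex $r''$ in the subtree hanging below $r$, setting $t=t'-(r\to r')+(r'\to r'')$; inserting $\pi$ and the transition weight $N_{r,r'}$ makes this factor cancel and leaves $\tfrac1Z\prod_{e\in E(t',r')}N_e\cdot N_{r',r''}$. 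Since the subtrees below the children of $r'$ partition $V\setminus\{r'\}$, summing over $r$ and $r''$ gives $\tfrac1Z\prod_{e\in E(t',r')}N_e\,(1-N_{r',r'})$, and adding the self-loop contribution $N_{r',r'}\,\pi(r',t')$ closes the equation. This telescoping is the step I expect to be the main obstacle, since it requires enumerating the tree-valued predecessors correctly.

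\textbf{Reversal to obtain the statement.} Now take $X$ stationary with kernel $\ra{M}$, which by \eqref{eq:dqsd22} is again positive on $G$, irreducible, with the same invariant law $\rho$, and condition on $X_0=r$. Let $\gamma=\inf\{m\ge 0:\{X_0,\dots,X_{-m}\}=V\}$ be the backward cover time; then $L_0$ depends only on $X_{-\gamma},\dots,X_0$. Conditionally on $X_0=r$, the reversed path $Y=(X_0,X_{-1},\dots,X_{-\gamma})$ is distributed as the chain $W$ with kernel $M=\ra{(\ra M)}$ started at $r$, and $\gamma$ is precisely its cover time $\tau_{|V|}(Y)$. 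By the identity of the first paragraph, $L_0=\LET(X_{-\gamma},\dots,X_0)=\FET(Y_0,\dots,Y_\gamma)$. Combining this with the stationary law of the second paragraph (applied to $N=\ra{M}$) and using $\P[X_0=r]=\rho(r)$,
\[
\P\left[\FET(W_0,\dots,W_{\tau_{|V|}})=(t,r)\mid W_0=r\right]
=\P\left[L_0=(t,r)\mid X_0=r\right]
=\frac{\pi(r,t)}{\rho(r)}
={\sf Const.}\,\frac{\prod_{e\in E(t,r)}\ra{M_e}}{\rho(r)},
\]
which is the desired statement. A purely combinatorial route is also conceivable, replacing the stationarity computation by a weight-preserving bijection among the covering paths producing a fixed tree (the device the authors call golf sequences), but the reversal identity above is what makes the probabilistic argument transparent.
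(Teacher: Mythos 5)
Your proposal is correct and follows essentially the same route as the paper's second proof (Section \ref{sec:qdqsd}): the deterministic identity $\FET(w)=\LET(\ra{w})$, the stationarity of $\pi(t,r)\propto\prod_{e\in E(t,r)}\ra{M}_e$ for the last-exit-tree chain driven by $\ra{M}$, and time reversal of the stationary chain to transfer the result to the $\FET$ of the $M$-chain started at $r$. Your enumeration of the predecessors of $(t',r')$ (a child $r$ of $r'$ together with a reconnection vertex in the subtree hanging below $r$, plus the self-loop term) is, if anything, spelled out slightly more explicitly than the paper's own balance computation; the genuinely different route in the paper is its combinatorial proof via heaps and golf sequences, which you mention but do not pursue.
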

The aim of this paper, is to give two new proofs of this result. 
This theorem has been proved by Hu, Lyons and Tang \cite{HLT} shortly before our work; but, at the moment we were writing the present paper, this result was not present on the first version of \cite{HLT}, on the arxiv. 

\noindent $\bullet$ This theorem implies Aldous--Broder result since in the reversible case, $\ra{M}=M$.\\
$\bullet$ We will prove that if $W$ is taken \textbf{under its stationary regime} (meaning that $W_0\sim \rho$) then
\ben\label{eq:qsdq3b}
\P\l[\FET\l(W_0,\cdots,W_{\tau_{|V|}}\r)=(t,r)\r]= {\sf Const.}\left[\prod_{e\in E(t,r)} \ra{M_e}\right].
\een
This formula is equivalent to \eref{eq:qsdq33}. \\
\noindent $\bullet$ From our point of view, Aldous--Broder theorem should be stated like Theorem \ref{theo:kgyqsd}, since this time reversal is not only a tool for the (original) proof of the Theorem but a characteristic of its conclusion. As a matter of fact, it is probably common for researchers in the field to try to find a coupling between Wilson construction \cite{Wil96,PW98} of the spanning tree (which is faster) and Aldous--Broder's, but the fact that Wilson algorithm produces a tree with distribution ${\sf Const.}\prod_{e\in E(t,r)} M_e$ shows that it is likely not possible, unless ones use Wilson algorithm with $\ra{M}$ instead of $M$.\par
\noindent $\bullet$ Both theorems above are valid on multigraph, which are the analogous of graph in which multiple edges between vertices are allowed,  as well as loops (edges $\{a,a\}$ adjacent to a single node are allowed). All the proofs we give are valid in this settings too (even if tiny adjustments to treat multiple edges could be needed at some places).

\begin{rem}\label{rem:egfq} In general,  $\prod_{e\in E(t,r)} M_e$ and  $\prod_{e\in E(t,r)} \ra{M}_e$ are different. Let us write $\prod_{u\in t} $ for a product over the set of vertices of $t$.
  For $(t,r)\in\SP^{\bullet}(G)$,
  \[\prod_{e\in E(t,r)} M_e=\prod_{u\in t \setminus \{r\}} M_{u,p(u)}={\sf Const}.\;\rho_r\prod_{u\in t \setminus  \{r\}} \rho_u M_{u,p(u)}\]
  while by \eref{eq:dqsd22}, and since in a tree all nodes but the root are children of some nodes,
\be
\prod_{e\in E(t,r)} \ra{M_e}&=& \prod_{u\in t \setminus  \{r\}} \l[ M_{p(u),u}\,\rho_{p(u)}/\rho_u\r]
= {\sf Const.} \; \rho_r \prod_{u\in t \setminus  \{r\}} \rho_{p(u)}M_{p(u),u}. 
\ee
so that these quantities are different when $\rho$ is not reversible with respect to $M$.\par
Different formula does not imply different values, but a graph of size 3 is sufficient to observe that these formulas are indeed different: consider
\[M=\begin{bmatrix} 0 & 1/3 & 2/3\\ 1/5 & 0 & 4/5 \\ 1/7 & 6/7 & 0\end{bmatrix},\quad \rho=\frac{1}{226}\begin{bmatrix} 33,95,98\end{bmatrix}\quad \text{and} \quad \ra{M}=\begin{bmatrix} 0 & 19/33 & 14/33\\ 11/95 & 0 & 84/95 \\ 11/49 & 38/49 & 0\end{bmatrix}.\]
It can be verified that $\rho$ is the invariant measure of $M$ and that $\ra{M}$ is the reversal kernel.
The tree $(t,r)$ defined by $(2,1)$ and $(3,1)$ has weights
$M_{2,1}M_{3,1} = 1/35 \neq \ra{M}_{2,1}\ra{M}_{3,1}=121/4655$.
\end{rem}
Let us say some words on the paths $(w_k,0\leq k \leq \tau_{|V|})$ that satisfy $\FET\l(w_0,\cdots,w_{\tau_{|V|}}\r)=(t,r)$. 
Such a path starts at $w_0=r$ and eventually reaches each vertex $u$ of $V\setminus\{r\}$ for the first time from $p(u)$ its parent in $(t,r)$. In other words, $\FET\l(w_0,\cdots,w_{\tau_{|V|}}\r)=(t,r)$ iff
\ben\label{eq:fet} \{(w_{\tau_k},w_{\tau_k-1}), 2 \leq k \leq |V|\}= \{(u,p(u)), u \in V\setminus \{r\}\}, \een
meaning that $k\mapsto w(\tau_k)$ induces a decreasing labelling towards the root. \par 
The analysis of $\P(\FET\l(W_0,\cdots,W_{\tau_{|V|}}\r)=(t,r))$ from this kind of considerations leads to sum on all the paths according to the order at which the vertices are reached.
Since this direct approach of Theorem \ref{theo:kgyqsd} seems difficult to complete, the initial proofs by both Aldous and Broder of their theorems follow a tricky path which is purely probabilistic. In our new proof, we avoid also the direct treatment of the labellings discussed above, but produce instead a combinatorial argument, relying on a new object we introduce, the golf sequence, which, we believe is interesting in its own.

\subsection{Some recalls from combinatorics} 
\label{sec:srfc}

The new proof uses many ingredients of the combinatorics folklore, and  we must say that we have been deeply inspired by Zeilberger short paper \cite{DZ} where many results from linear algebra are proved by means of combinatorial tools, notably, Foata's proof of MacMahon's master theorem \cite{DZ, CF} and the famous
\begin{theo}[Matrix Tree Theorem]
	\ben\label{eq:qsftjhg}
	\det(\Id-M^{(r)})= \sum_{(t,r)\in\SP^{\bullet}(G)}\prod_{e\in E(t,r)} M_e,\een
	where $M^{(r)}$ is the matrix $M$ deprived of the line $r$ and column $r$, and $\Id$ the identity matrix with the same size as that of $M^{(r)}$. 
\end{theo}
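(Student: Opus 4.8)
The plan is to prove the Matrix Tree Theorem by expanding the determinant $\det(\Id - M^{(r)})$ combinatorially and matching terms against weighted spanning trees. Writing $N = M^{(r)}$ for the $(|V|-1)\times(|V|-1)$ matrix indexed by $V\setminus\{r\}$, I would start from the Leibniz/permutation expansion
\begin{equation*}
\det(\Id - N) = \sum_{\sigma} \sign(\sigma) \prod_{u \in V\setminus\{r\}} (\Id-N)_{u,\sigma(u)},
\end{equation*}
where $\sigma$ ranges over permutations of $V\setminus\{r\}$. Each factor $(\Id-N)_{u,\sigma(u)}$ equals $\delta_{u,\sigma(u)} - M_{u,\sigma(u)}$, so expanding the product over the two choices (identity part versus $-M$ part) for each $u$ rewrites the determinant as a signed sum over pairs $(\sigma, S)$, where $S \subseteq V\setminus\{r\}$ is the set of indices on which we pick the $-M_{u,\sigma(u)}$ term and where $\sigma$ must fix every vertex outside $S$. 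The contribution is $\sign(\sigma)\,(-1)^{|S|}\prod_{u\in S} M_{u,\sigma(u)}$, and since $\sigma$ is determined on $V\setminus\{r\}\setminus S$ by being the identity there, the relevant data is a permutation of $S$ together with its weight.

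Next I would reorganize this as a sum over functional digraphs. A cleaner route is to recognize $\det(\Id-N) = \sum_{k\ge 0} (-1)^k e_k$, where $e_k$ is the sum of principal $k\times k$ minors of $N$; each such minor, again by Leibniz, is a signed sum over permutations of a $k$-subset, i.e.\ over collections of vertex-disjoint directed cycles supported in $V\setminus\{r\}$, weighted by $\prod M_e$ and signed by $(-1)^{k}\sign$ of the permutation. Grouping a permutation into its cycle decomposition, the sign $(-1)^{k}\sign(\sigma)$ becomes $(-1)^{(\text{number of cycles})}$ after accounting for cycle lengths, so $\det(\Id-N)$ equals a sum over all \emph{cycle covers} (partial permutations of $V\setminus\{r\}$) weighted by $\prod_e M_e$ and signed by $(-1)^{\#\text{cycles}}$. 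This is exactly the setting for a sign-reversing involution.

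The decisive step, and the one I expect to be the main obstacle, is constructing an explicit \textbf{sign-reversing involution} that cancels all the cycle-cover terms except those coming from spanning trees, following the combinatorial philosophy of Zeilberger \cite{DZ}. Concretely, I would consider objects that are a partial permutation (a union of disjoint cycles) on $V\setminus\{r\}$ paired with extra structure so that the surviving, uncancelled objects are precisely rooted spanning trees directed toward $r$. The standard device is to consider functional graphs on $V$ where each non-root vertex has out-degree one: such a graph decomposes into a set of trees hanging off a set of cycles; one then pairs up configurations containing at least one cycle by a fixed rule (e.g.\ locating a canonical cycle via the smallest vertex and toggling an edge), which flips the number of cycles and hence the sign, while fixing exactly the acyclic configurations — the spanning trees rooted at $r$. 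The care needed is in defining the canonical cycle and the toggle so the map is a genuine involution (applying it twice returns the original object) and is sign-reversing on every non-tree object; getting the bookkeeping of signs and the well-definedness of the pairing exactly right is the crux. Once the involution is in place, the only terms that survive are the spanning trees oriented toward $r$, each contributing $+\prod_{e\in E(t,r)} M_e$, which yields \eref{eq:qsftjhg}.
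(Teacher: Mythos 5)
Your plan follows the same route as the paper (which takes it from Zeilberger): Leibniz expansion of $\det(\Id-M^{(r)})$, reinterpretation as a signed sum over configurations built from disjoint cycles, and a sign-reversing involution whose fixed points are the spanning trees oriented toward $r$. Your first two steps are correct: the expansion $\det(\Id-M^{(r)})=\sum_{C}(-1)^{\#C}\prod_{e\in C}M_e$, summed over collections $C$ of vertex-disjoint directed cycles in $V\setminus\{r\}$, is exactly the paper's identity \eref{eq:fqs}, and your sign computation $(-1)^k\sign(\sigma)=(-1)^{\#\mathrm{cycles}}$ is right.

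The gap is in the decisive step, and it is not where you locate it. The cycle-cover expansion above is valid for an \emph{arbitrary} matrix, whereas the spanning-tree identity \eref{eq:qsftjhg} is not: with $|V|=2$ and $r=2$ it reads $1-M_{1,1}=M_{1,2}$, which forces $M_{1,1}+M_{1,2}=1$. So any correct proof must use the stochasticity of $M$, i.e.\ $\sum_{v'\in V}M_{u,v'}=1$ for every $u$, and your argument never does. This is precisely what the paper's equation \eref{eq:dsdQ} supplies: the diagonal entry is rewritten as $\sum_{v'\in V}M_{u,v'}-M_{u,u}$, and for each vertex not covered by a signed cycle the factor $\sum_{v'}M_{u,v'}$ is expanded into a choice of one outgoing edge (allowed to point to $r$). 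That expansion is what \emph{creates} the out-degree-one graph $B$ — your ``extra structure'' — with the correct weights; without it there are no tree edges in sight and nothing for the signed cycles to cancel against, so no involution defined on cycle covers alone can reach the spanning-tree sum. Two further points of care once the objects $(B,C)$ are in place: the supports of $B$ and $C$ must partition $V\setminus\{r\}$ (each non-root vertex either carries an outgoing edge of $B$ or lies on a cycle of $C$), or else transferring a cycle does not produce an object of the same type; and the involution is not ``toggling an edge'' but moving an entire canonical cycle between the $B$-component and the $C$-component, which changes $\#C$ by one and hence flips the sign. That part of the bookkeeping is the easy half; the row-sum expansion is the idea your proposal is missing.
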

\noindent
We borrow the argument from Zeilberger \cite{DZ} since it helps to present the arguments of our own proof.
\begin{proof}
 First,  a cycle is a (class of equivalence of) path $(w_0,\cdots,w_{m})$ for some $m\geq 1$, such that $w_m=w_0$ and $|\{w_0,\cdots,w_m\}|=m$ (it is simple). Two cycles are equivalent if one can be obtained from the other by shifting its indices in $\Z/m\Z$.

Observe that
\ben\label{eq:dsdQ}
\Id-M^{(r)} = \begin{bmatrix}\l(\sum_{v' \in V}M_{u,v'}\r)\1_{(u=v)} -M_{u,v} \end{bmatrix}_{(u,v)\in (V\setminus\{r\})^2}.
\een
Denote by ${\cal B}$ the set of pairs $(B,C)$ such that:\\
$\bullet$ $B$ is a directed graph on $V$, where each vertex of $V\setminus\{r\}$ has either 0 or 1 outgoing edge ending in $V$ (including $\{r\}$ this time). Denote by $V_B$ the set of vertices from which there is an outgoing edge.\\ 
$\bullet$ $C$ is a collection of directed disjoint cycles on $V_C=(V\setminus \{r\})\setminus V_B$.
\\
\indent Set $\W(B):=\prod_{u\in V_B} M_{u,t(u)}$ where $t(u)$ is the target of the edge starting at $u$; $\W(C):=(-1)^{N(C)}\prod_{c \textrm{ cycles of C}}\prod_{e \in c} M_{e^{-},e^+}$ the product of the weight of edges along the directed cycles of $C$, where $N(C)$ the number of cycles of $C$ and finally define $\W(B,C):=\W(B)\W(C)$. \\
\indent An expansion of \eref{eq:dsdQ} allows one to see that $\W({\cal B}):=\sum_{(B,C)\in {\cal B}} \W(B,C)=\det(\Id-M^{(r)})$. Here is the argument: first, expand $\det(\Id-M^{(r)})$ using Leibniz formula:
  \[\det(\Id-M^{(r)})= \sum (-1)^{\sign(\sigma)} \prod_{i\neq r} (\Id-M^{(r)})_{i,\sigma(i)},\] where the sum range on all permutations $\sigma$ on $V\setminus\{r\}$.
  Now, consider the set $F(\sigma)=\{i:\sigma(i)=i\}$ of fix points of $\sigma$, and rewrite, by \eref{eq:dsdQ}:
  \[\prod_{i \neq r}(\Id-M^{(r)})_{i,\sigma(i)}= \l(\prod_{i\in F(\sigma)} \l(-M_{i,i}+\sum_{j\in V}M_{i,j} \r)\r)\l(\prod_{i\in V\setminus (\{r\}\cup F(\sigma))} -M_{i,\sigma(i)}\r).\]
  The second parenthesis can be interpreted as the weight of cycles of $\sigma$ with lengths at least 2. Now expand the first parenthesis (without simplifying $M_{i,i}$, so that $M_{i,i}$ comes with a sign $+$, and in another term, with a sign $-$).
This first parenthesis can be rewritten as a sum over  $A\subset F(\sigma)$ as follows:
\[\prod_{i\in F(\sigma)} \l(-M_{i,i}+\sum_{j\in V}M_{i,j} \r)=\sum_{A \subset F(\sigma)} \l(\prod_{i \in A} (-M_{i,i}) \r) \l(\prod_{i \in F(\sigma)\setminus A} \sum_{j}M_{i,j}\r).\] 
Each factor $-M_{i,i}$ can be seen to be the weight of a loop over $i$ (that is a cycle of size $1$), and by expansion, $\prod_{i \in F(\sigma)\setminus A} \sum_{j\in V}M_{i,j}$ can be interpreted as the weight of a directed graph where each vertex of $F(\sigma)\setminus A$ has a single outgoing edge, ending on any vertex of $V$. This ends the argument explaining why $\W({\cal B})=\det(\Id-M^{(r)})$.
~\\
-- We claim now that $\W({\cal B})=\sum_{(t,r)\in\SP^{\bullet}(G)}\prod_{e\in E(t,r)} M_e$. The graphs ``$B$'' are made of cycles and trees, and $C$ is made of cycles (with a sign of the weight corresponding to parity). For any pair $(B,C)$ having (totally) at least one cycle one can define $(B',C')$ as follows: for a total order on the set of oriented cycles, take the greatest cycle $c$ in the union of $B$ and $C$. 
        Denote by $(B',C')$ the pair obtained by moving $c$ from the component containing it to the other.
        This map $(B,C) \to (B',C')$ is clearly an involution and satisfies $\W(B',C')=-\W(B,C)$. Hence, $\W({\cal B})$ coincides with the sum of the $\W(B,C)$ taken on the set of pairs $(B,C)$ which have no cycles: $C$ is empty, and the graph $B$ has no cycle, and since its number of edges is one less than its number of vertices, it is a spanning tree.
\end{proof}
\color{black}

\paragraph{The quantity $\det\l( \Id-M^{(r)}\r)$ is the central algebraic object;}
it is somehow the partition function of the family of weighted spanning trees under inspection, but not only. It will appear under the following forms at several places of the paper.

Notice that the direct expansion of the determinant (using permutations)   gives
  \ben\label{eq:fqs}\det( \Id-M^{(r)})=\sum_{C} (-1)^{N(C)}\prod_{c \textrm{ cycles of C}}\prod_{e \in c} M_{e}\een where the sum ranges over all sets $C$ of disjoint (oriented) cycles of length $\geq 1$   on the set of vertices $V\setminus \{r\}$.  \\
Now by considering the involution that changes the orientation of every cycle one gets
  \be\det( \Id-M^{(r)})=\det( \Id-\ra{M}^{(r)})\ee and {therefore for a fixed $r\in V$
\be \sum_{(t,r)\in\SP^{\bullet}(G)}\prod_{e\in E(t,r)} M_e= \sum_{(t,r)\in\SP^{\bullet}(G)}\prod_{e\in E(t,r)} \ra{M}_e.
\ee
This together with Remark \ref{rem:egfq} say that even though the terms associated to each tree $(t,r)$ may be different, these sums are equal (similar identities are present in Hu \& al. \cite{HLT}).
\par
Even more, the invariant distribution $\rho$ of the Markov kernels $M$ and $\ra{M}$ is related to these quantities by:
\ben\label{eq:qsdqms}
\rho_w={\sf Const}.\det(\Id-M^{(w)}) ={\sf Const}.\det(\Id-\ra{M}^{(w)})
\een 
so that, from the matrix tree theorem, this provides a connection between $\rho_w$ and the total mass of spanning trees rooted at $w$. To prove \eref{eq:qsdqms}, there are several methods, one of them being the use of Theorem \ref{theo:AB} or \ref{theo:kgyqsd}, but direct arguments exist (and are classical). One wants to solve the vector system $\l\{\rho(\Id-M)=0, \rho\l[\begin{smallmatrix} 1 \\\vdots \\1\end{smallmatrix}\r]=1\r\}$. Assume here that the vertex are labeled from $1$ to $n$. Since the sum by rows of $\Id-M$ is zero, the system is over-determined, so we discard the equation $(\rho(\Id-M))_{1,n}=0$. Then, the row vector $\rho$ is solution of
\ben\label{eq:gdokqs}
\rho Q =\begin{bmatrix} 0 & \cdots & 0 & 1\end{bmatrix}          \textrm{ where }     Q= \left[\begin{array}{c|c}  \begin{bmatrix}(\Id-M)_{i,j}\end{bmatrix}_{i \leq n, j \leq n-1} & \begin{array}{c} 1 \\\vdots \\1\end{array} \end{array}\right].                
\een
By symmetry, it suffices to prove that $\rho_n$ satisfies \eref{eq:qsdqms}. For this use the Cramer rule to get \eref{eq:qsdqms}: notice that replacing the $n$-th row of $Q$ by $[0,\cdots,0,1]$ we obtain the wanted determinant in the numerator (as a cofactor).  It remains to show for the denominator that
$\det( Q)
  =\sum_{w}\det(\Id-M^{(w)})$;
the following argument is classical. It suffices to expand $\det(Q)$ according to the last column and show that 
\ben\label{eq:fi} \det((\Id-M)_{(i,j): i \neq a, j \neq n})(-1)^{n-a}=\det((\Id-M)_{(i,j):i \neq a, j\neq a})=\det(\Id-M^{(a)}).\een
This first equality is more general: in fact, $\det((\Id-M)_{(i,j): i \neq a, j \neq b})(-1)^{b-a}= \det((\Id-M)_{(i,j):i \neq a, j\neq a})$ for all $b\in \{1,\cdots,n\}$, and this independence to the deprived column $b$ (up to the sign) is a consequence of the fact that the sum of the columns of $\Id-M$ is 0 (this property allows to play with the lacking column $b$: take the $a$-th column $C_a$ and replace it by $C_a+\sum_{k\neq a,b} C_k=\sum_{k\neq b} C_k=-C_b$. This replacement does not alter the determinant, and now, we have $-C_b$ at position $a$).

\section{New combinatorial proof of Theorem \ref{theo:kgyqsd}}

We will assume all along the proof that $|V|\geq 2$, otherwise the statement is trivial.\\
Theorem \ref{theo:kgyqsd} expresses $\P(\FET(W_i, 0\leq i \leq \tau_{|V|}(W))=(t,r))$ in terms of $\ra{M}$, but since $W$ is a Markov chain with kernel $M$, we need also to consider reverse paths in the proof.
For a path $w=(w_0,\cdots,w_m)$ on $G$ denote by $|w|=m$ its number of steps, $\overline{w}=w_m$ its last position, and $\underline{w}=w_0$ its first position. Denote by $\ra{w}=(w_m,w_{m-1},\cdots,w_0)$ the reverse time path and set }
\ben\label{eq:weightpath}
\W(w):=\prod_{i=0}^{|w|-1} \ra{M}_{w_{i+1},w_{i}},\een
which is the weight of $\ra{w}$ starting at $\overbar{w}$ taken under the kernel $\ra{M}$.

A \textit{heap $h$ of directed outgoing edges of a vertex $u$} is a finite sequence
\[h= [ (u,t_i), 1\leq i \leq \ell]\] for some $\ell\geq 0$, where, for each $i$, $\{u,t_i\}\in E$. The set of such heaps is $\cup_{k\geq 0} O_u^k$  where  $O_u=\{(u,v)~: \{u,v\}\in E\}$ is the set of oriented outgoing edges out of $u$ (in $G$).
A \textit{collection of heaps} is a sequence $H:=(H_u, u \in V)$ of heaps, where $H_u\in\cup_{k\geq 0} O_u^k$ is a heap of directed edges out of $u$ (see Fig. \ref{heaps}, picture 2). The set of collections of heaps is the product set
\[\HC=\prod_{u \in V} \Big(\bigcup_k O_u^k\Big).\]
Each element of $\HC$ can be seen as a multigraph (repetition of edges permitted), in which the sequence of outgoing edges out of each node is equipped with a total order.
We will use collections of heaps to encode various sorts of family of connected paths. As a combinatorial tool, they allow some rearrangements of steps, and then provide some ways to construct bijections efficiently.

The concatenation operation $\oplus$ of heaps is defined as usual:
\[[e_1,\cdots,e_m] \oplus [e_1', \cdots,e_{m'}']= [ e_1,\cdots,e_m,e'_1,\cdots,e'_{m'}]\] which extends to collection of heaps as expected :  $H\oplus H':=( H_u\oplus H'_  u, u \in V)$.\\
We say that $h$ is a prefix of $h\oplus h'$, notion which extends again to collections.

The weight of a heap  $h=[ (s_i,t_i),1\leq i \leq m]$ and of a collection $H=(H_u,u\in V)$ are 
\be
\W(h)&:=&\prod_{i=1}^m \ra{M}_{s_i,t_i},\\
\W(H) &:=&\prod_{u \in V} \W(H_u).
\ee
The weight of an empty heap is set to 1.

\paragraph{Prefix removal (PR).}
For  $h=h'\oplus h''$, the ``prefix removal'' of $h'$ in $h$ is possible. We write this operation
$h \ominus_{PR} h' = h''$. 
This extends to collections: if $H=H'\oplus H''$, we set $H \ominus_{PR} H' = H''$.

\paragraph{Trees.} 
The number of children $\deg{(t,r)}{u}$ of a node $u$ in $(t,r)$ which is often called out-degree or simply degree in the literature in the case of trees, coincides with the number of incoming edges at $u$ in $(t,r)$ (the notation  $\deg{(t,r)}{u}$ is introduced to avoid any confusion with $\degree_G(u)$, the degree  of the graph at $u$ which comes into play too, and with the total degree of $u$ in $t$). A node $u$ with $\deg{(t,r)}{u}=0$ is called a leaf, and the set of leaves is denoted $\partial t$. Notice that since $|V|\geq 2$, $r$ is not a leaf. The elements in the set $t^o=V \setminus \partial t$ are called internal nodes.

\subsection{Step 1. Collection of heaps encoding of a path}

\begin{defi}
  The passport of a collection of heaps $H=(H_u,u\in V)\in \HC$ is the pair
  \be
  \OUTIN(H):=\l[({\sf Out}_u(H),u \in V),({\sf In}_u(H),u \in V)\r]
  \ee 
  where, for all $u\in V$, ${\sf Out}_u(H):=|H_u|$ records the number of outgoing edges of $u$ and 
  \be
        {\sf In}_u(H)&:=& \sum_{w\in V} | (w,u) \textrm{ in } H_w|
        \ee
                          records the number of incoming edges to $u$ (with multiplicities).
\end{defi}
As a standard passport in real life, the passport is used to record the number of entries/exits at each node, with multiplicities. Different collections of heaps can have the same passport.  

Denote by $\Paths(r)$ the set of paths $w$ starting at $w_0=r$ (with any finite length). 
With any path $w\in \Paths(r)$ associate the collection of heaps $\Heap(w):=(\Heap_u(w), u \in V)\in \HC$, where
\[\Heap_u(w):=\l[ (w_{j+1},w_{j})~: 0 \leq j \leq |w|-1, w_{j+1}=u\r];\]
these edges are then taken according to their order in $w$, and they correspond to edges that reach $u$, that are turned around. It can be useful to consider them as steps outgoing from $u$ in $\ra{w}$, still taken according to their order in $w$ (see \Cref{heaps}).
\begin{figure}[h!]
	\centering
	\includegraphics[scale=0.5]{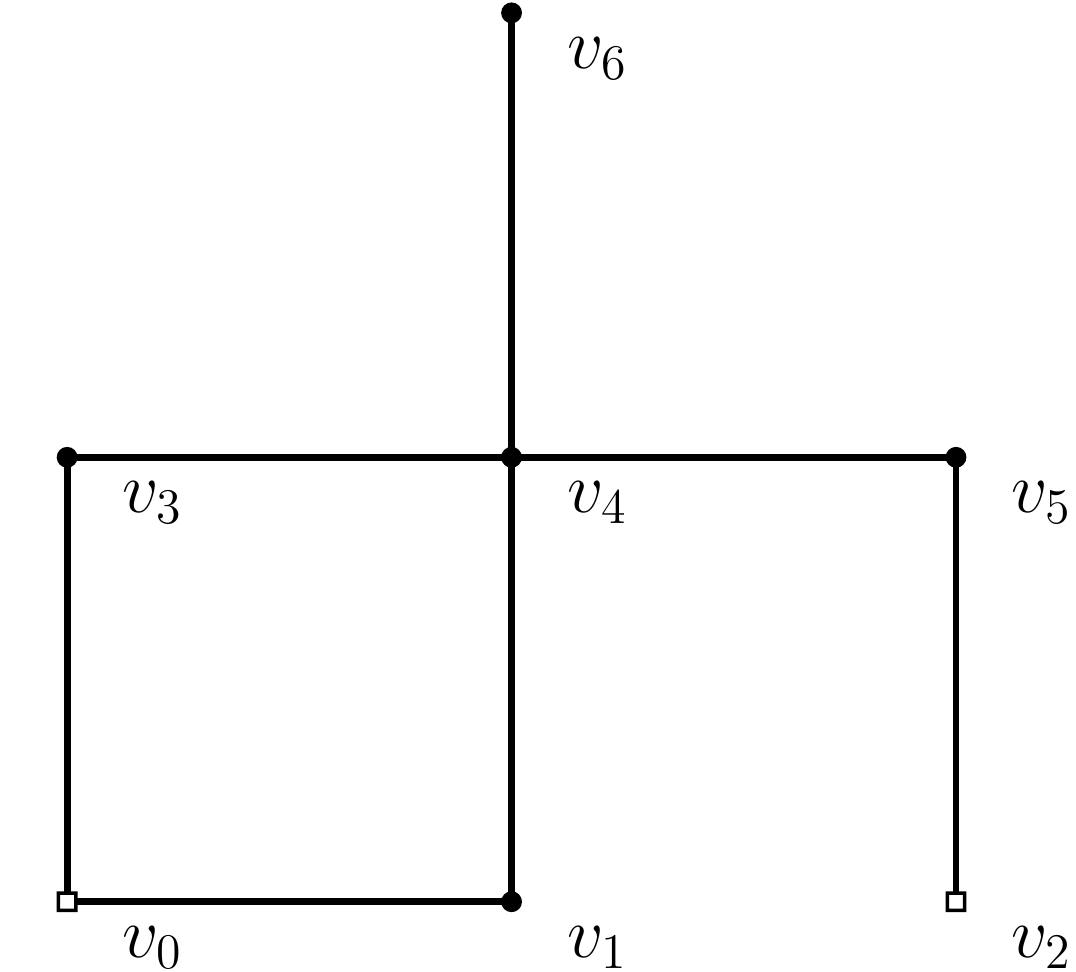}\hspace{1cm}\qquad \includegraphics[scale=0.5]{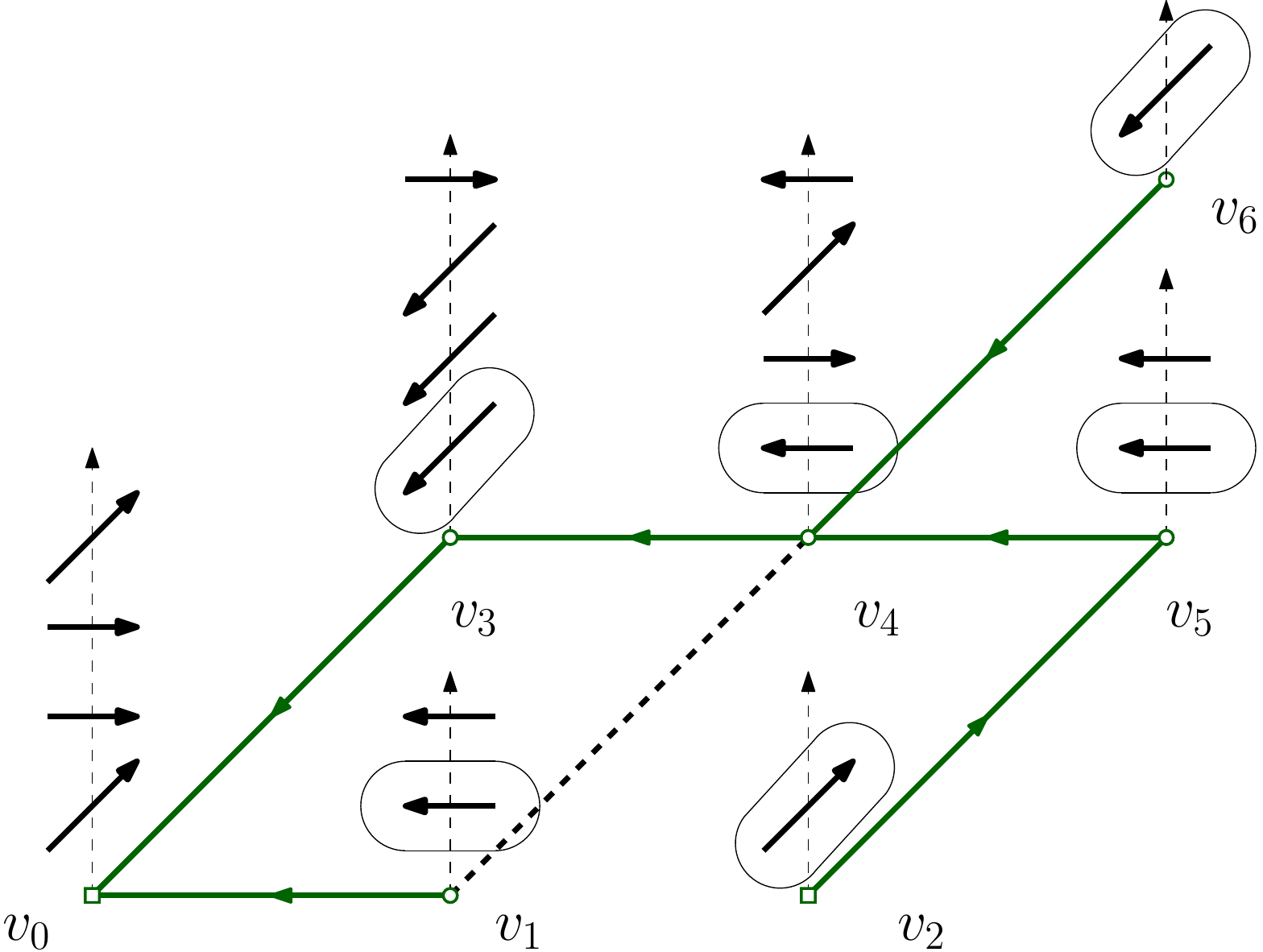}
	\caption{\label{heaps} Left: graph $G$, where the walk $w=v_0 v_3 v_0 v_1 v_0 v_1 v_0 v_3 v_0 v_3 v_4 v_5 v_4 v_6 v_4 v_3 v_4 v_5 v_2$ is performed. Right: Heap associated to the path $w$; for example, $H_{v_3}=[(v_3,v_0),(v_3,v_0),(v_3,v_0),(v_3,v_4)]$. To read the steps of $w$ in the heap, follow the edges from top to bottom to form $\protect\ra{w}$, then transform $\protect\ra{w}$ into $w$. The steps used to construct the tree are circled.}
	
\end{figure}

As warm up, let us prove the following classical fact.
\begin{lem}\label{lem:ujkgkuj}
  The map
  \[\app{\Heap}{{\Paths}(r)}{ \HC}{w}{\Heap(w)}\]
is a weight preserving injection (meaning that $\W(w)=\W(\Heap(w))$.  
\end{lem}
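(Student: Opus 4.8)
The plan is to treat the two assertions separately: weight preservation will be a one-line reindexing, while injectivity will follow by reconstructing $w$ from $\Heap(w)$. For weight preservation, I would observe that the steps of $w$ and the edges of the collection $\Heap(w)$ are in canonical one-to-one correspondence: the step of $w$ going from $w_j$ to $w_{j+1}$ (for $0\le j\le |w|-1$) contributes the factor $\ra{M}_{w_{j+1},w_j}$ to $\W(w)$, and it is recorded exactly once in $\Heap(w)$, namely as the edge $(w_{j+1},w_j)$ placed inside the heap $\Heap_{w_{j+1}}(w)$, whose weight factor is again $\ra{M}_{w_{j+1},w_j}$. Since every edge occurring in $\Heap(w)$ comes from precisely one step of $w$, the products defining $\W(w)$ and $\W(\Heap(w))=\prod_{u\in V}\W(\Heap_u(w))$ are two orderings of the same multiset of scalars, hence equal.

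For injectivity I would build an explicit left inverse of $\Heap$ on its image. The guiding observation is that $\Heap_u(w)$ is exactly the list of edges leaving $u$ in the time-reversed walk $\ra{w}$, but recorded in the reverse of the order in which $\ra{w}$ uses them: the last entry of $\Heap_u(w)$ corresponds to the latest arrival at $u$ in $w$, i.e.\ to the \emph{first} departure from $u$ in $\ra{w}$, and reading the list backwards produces successive departures from $u$ in $\ra{w}$. This match is checked by comparing the position in the heap (increasing $w$-time $j$) with the reversed time index of $\ra{w}$. Granting it, $\ra{w}$ is \emph{forced}: starting from its initial vertex $\overbar{w}$, whenever the walk sits at a node $u$ it must leave along the next still-unused edge of $\Heap_u(w)$, scanned from the last entry towards the first. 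This deterministic rule reconstructs $\ra{w}$, hence $w$, provided the starting vertex $\overbar{w}$ is known.

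To recover $\overbar{w}$ from $H:=\Heap(w)$ alone, I would read $H$ as a directed multigraph whose out-degrees are ${\sf Out}_u(H)=|H_u|$ and whose in-degrees are ${\sf In}_u(H)$; since $\ra{w}$ is a walk from $\overbar{w}$ to $r$, these degrees balance at every node except that $\overbar{w}$ carries one excess outgoing edge (and $r$ one excess incoming edge), unless $\overbar{w}=r$ and all degrees balance. Thus $\overbar{w}$ is read directly off the passport $\OUTIN(H)$. Consequently, if $w,w'\in\Paths(r)$ satisfy $\Heap(w)=\Heap(w')=H$, they share a passport, hence the same $\overbar{w}$, and the forced rule makes $\ra{w}$ and $\ra{w'}$ agree step by step (same start, same compelled moves, same total length $\sum_{u\in V}|H_u|$), so $w=w'$. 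I expect the main obstacle to be precisely the bookkeeping of the second paragraph, namely verifying that the reversed reading of each $\Heap_u(w)$ is genuinely the traversal order of the out-edges of $u$ in $\ra{w}$; once this is pinned down the reconstruction is entirely forced, and since it is only ever applied to an actual image $\Heap(w)$, no separate check that the procedure is well defined on all of $\HC$ is needed.
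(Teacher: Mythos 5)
Your proposal is correct and follows essentially the same route as the paper's proof: you recover the endpoint $\overbar{w}$ from the out/in imbalance in the passport of $\Heap(w)$ (with the same caveat that balance everywhere forces $\overbar{w}=r$), and then reconstruct $\ra{w}$ deterministically by popping the topmost unused edge of the heap at the current vertex, while weight preservation is the same step-by-step reindexing. No gaps; your explicit check that the reversed reading of each $\Heap_u(w)$ matches the traversal order of $\ra{w}$ is a point the paper leaves implicit.
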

This is not a bijection since some elements of $\HC$ do not encode any paths. 

\begin{proof} Let us take an element $h=(h_u,u\in V)$ in the image, and let us reconstruct its unique preimage $w$ by the map $\Heap$ (in fact, we build $\ra{w}$). The total number of edges in $h$ is the number of steps $m$ of any walk $w$ such that $\Heap(w)=h$. If $m=0$, the statement is clear. Recall that the collection of heaps gives the steps of $\ra{w}$. Assume $m>0$.
 This allows to recover the only possible value for the final point $w_m$ too: this is the unique node $w_m$ having one more outgoing edge than incoming (if such a node does not exist, the path ends and starts at $r$,
  and then $w_m=r$). Now, knowing the last position of the walk $w_{m}$, take $(w_m,u)$ the rightmost (top see \Cref{heaps}) edge in $\Heap_{w_m}$, record the identity of the node $u$, and remove the edge $(w_{m},u)$ from $\Heap_{w_m}$. Now, set $w_{m-1}:=u$. Iterate the construction until all the heaps are empty.\\
  The statement about the weights easily follows.
\end{proof}

\begin{defi}\label{defi:heapc}
  \bir
     \itr A collection of heaps $H\in \HC$ with a balanced passport  i.e.
 \ben\label{eq:dqsuld2}
 \OUTIN\l(H\r)= [ (N_u,u \in V),(N_u,u \in V)],
 \een for some $N_u\geq 0$, is called a heap of cycles. 
 Denote by ${\sf HCycles}$ the set of heaps of cycles and by ${\sf HCycles}({\sf free}\ni v)$ the heaps of cycles where $N_v=0$, meaning that $v$ is not an element of any cycle.
 \itr A  heap of cycles $H\in {\sf HCycles}$ satisfying $N_u\leq 1$ for all $u$, is called trivial (it is a set of non intersecting cycles). We denote by ${\sf HCycles}^{{\sf Trivial}}({\sf free}\ni v)$
 the set of trivial heap of cycles for which $N_v=0$ (that is, non incident to $v$).\eir
\end{defi}~

\begin{rem} It is folklore to build ``classical heap of cycles'' from ``our collection of heaps with balanced passport'' see Viennot \cite{VX}. Classical heaps of cycles are equivalence classes of sequence of cycles $(c_1,\cdots,c_p)$ (were $p\geq 0$),  where the relation is the transitive closure of the following relation: two sequences $(c_1,\cdots,c_p) \sim_R (c'_1,\cdots,c'_{p'})$ are equivalent if they coincide except for some non intersecting and consecutive cycles $c_i$ and $c_{i+1}$ and $(c_i,c_{i+1})=(c'_{i+1},c'_i)$. In words: non intersecting cycles commute, and intersecting cycles do not.
  Following Cartier \& Foata \cite{CF} terminology, sequences of cycles with the partial commutation formula form a partially commutative monoid (for the concatenation operation), which can be geometrically represented using heap of ``real'' cycles (see Viennot \cite{VX} and Krattenthaler \cite{CK}).\par
  Giving these classical heap of cycles one can form the collection of heaps $\Heap(c_1)\oplus\cdots\oplus\Heap(c_p)$. \\
  To define the converse we need to introduce first, the ${\sf PopCycle}$ function. Assume that a collection of heaps $(H_u,u\in V)$ is given. For a vertex $w_0$ we define the procedure ${\sf PopCycle}(w_0)$ as: follow the path starting at $w_0$ and stop it the first time it forms a cycle $c$, that is the path $w_0,w_1,\cdots,w_j$ satisfies $w_i=w_j$ for some $i<j$ and $j$ is the first index with this property; then return $c$ the cycle formed by $(w_i,\cdots,w_j)$ and suppress the cycle from the heaps (that is all the edges $(w_i,w_{i+1}),\cdots,(w_{j-1},w_j)$). 
  
  Now, to define the converse do the following: if all the heaps of the collection are empty, then associate the empty heap of cycles; otherwise, consider $w_0=u$ a vertex with non-empty heap. Starting from $i=1$ repeat the following procedure until the heaps are all empty: define $c_i:= {\sf PopCycle}(w_0)$, increment $i$ and define $w_0=u$, for a $u$ chosen among those for which $H_u\neq \varnothing$ (see \Cref{Cycleheaps} for a representation of this map).
  This procedure is well-defined since passports are balanced, i.e. the number of incoming edges is equal to the number of out-coming edges; and this property is preserved each time ${\sf PopCycle}$ is used. At the end, the sequence $(c_1,\cdots,c_m)$ obtained depends on the order of the nodes $u$ chosen, but it can be proved that for all choices of $u$, the final sequences of cycles are equal as heap of cycles.
  \begin{figure}[h!]
  	\centering
  	\includegraphics[scale=0.35]{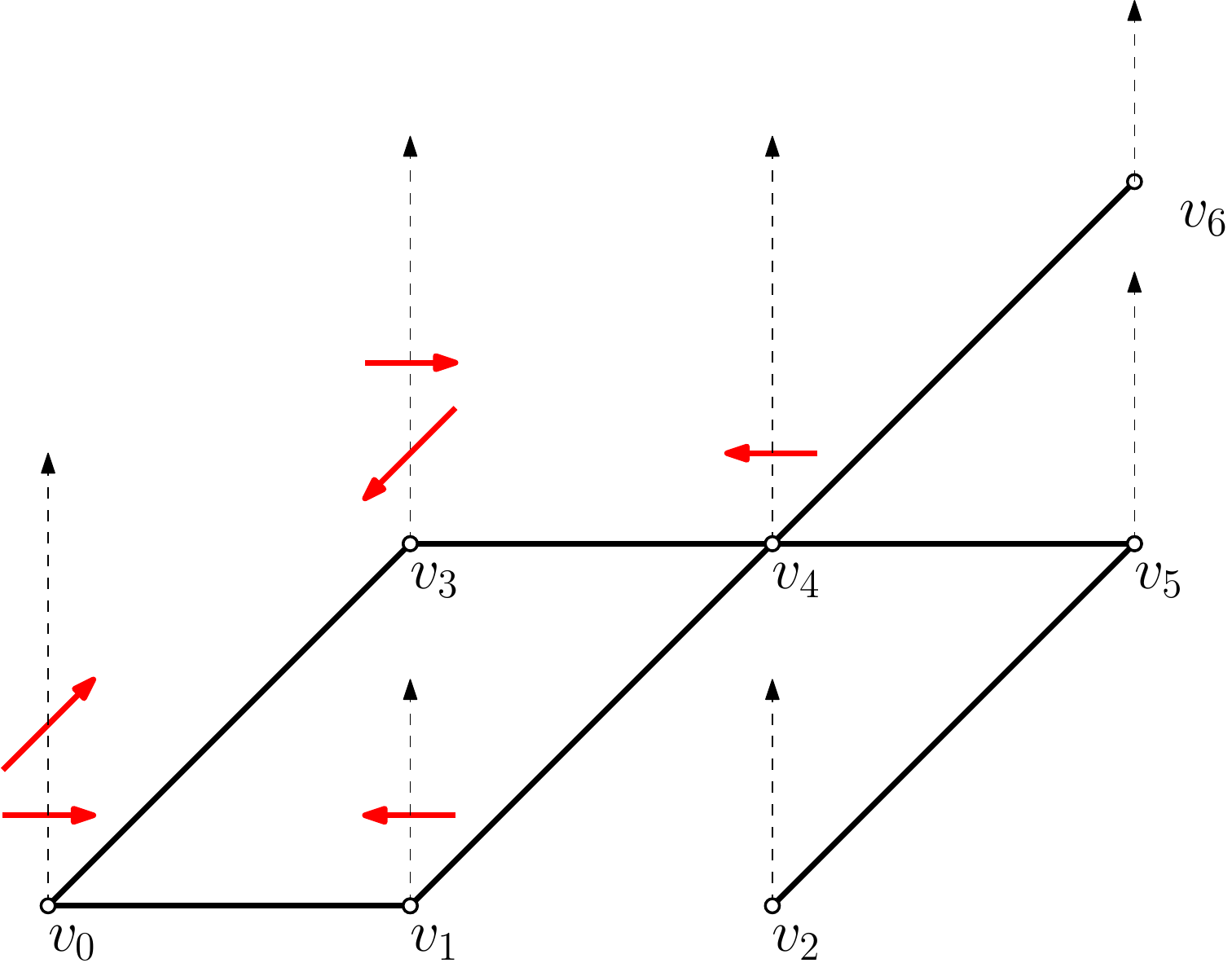} \hspace{5pt} \includegraphics[scale=0.35]{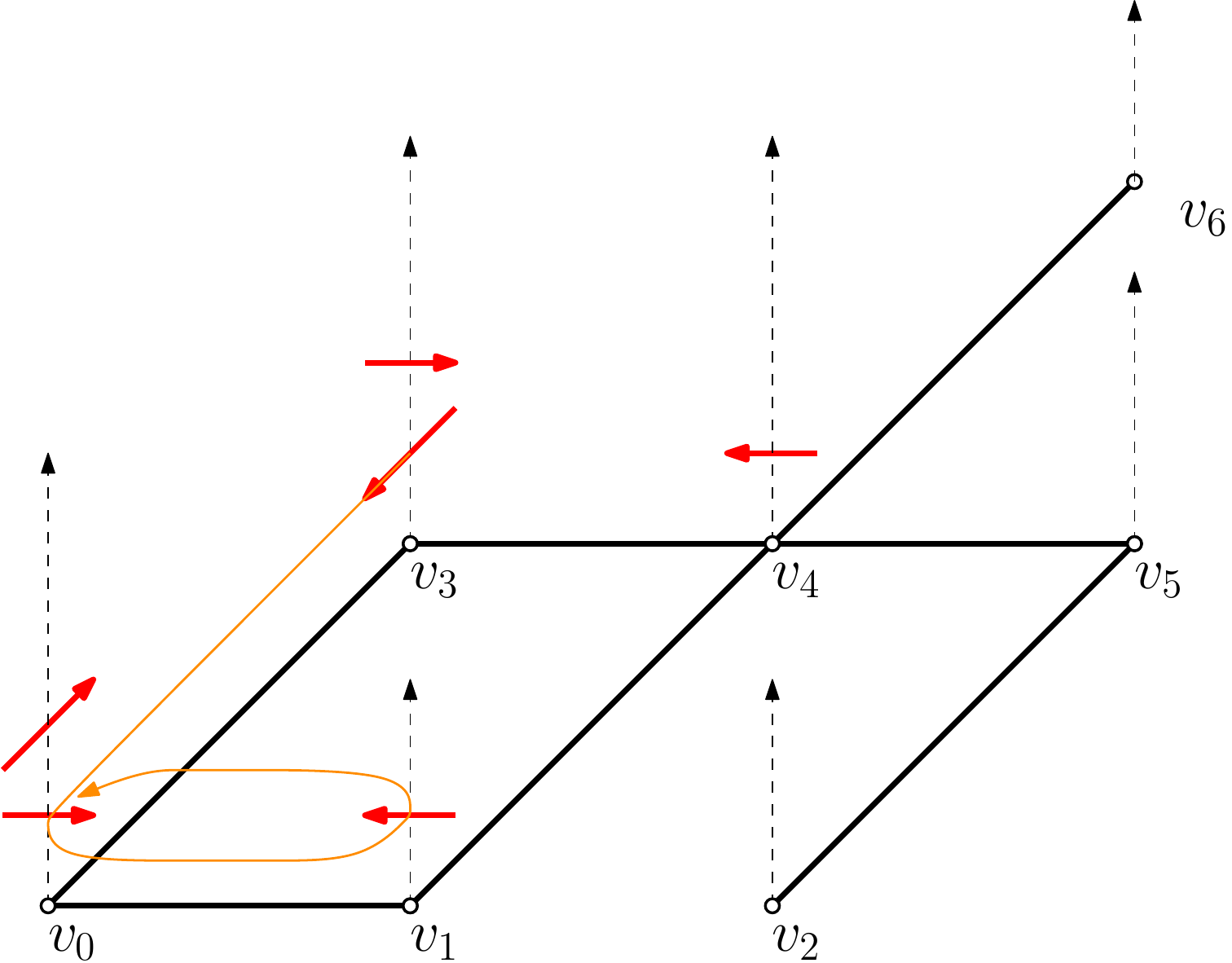} \hspace{5pt} \includegraphics[scale=0.35]{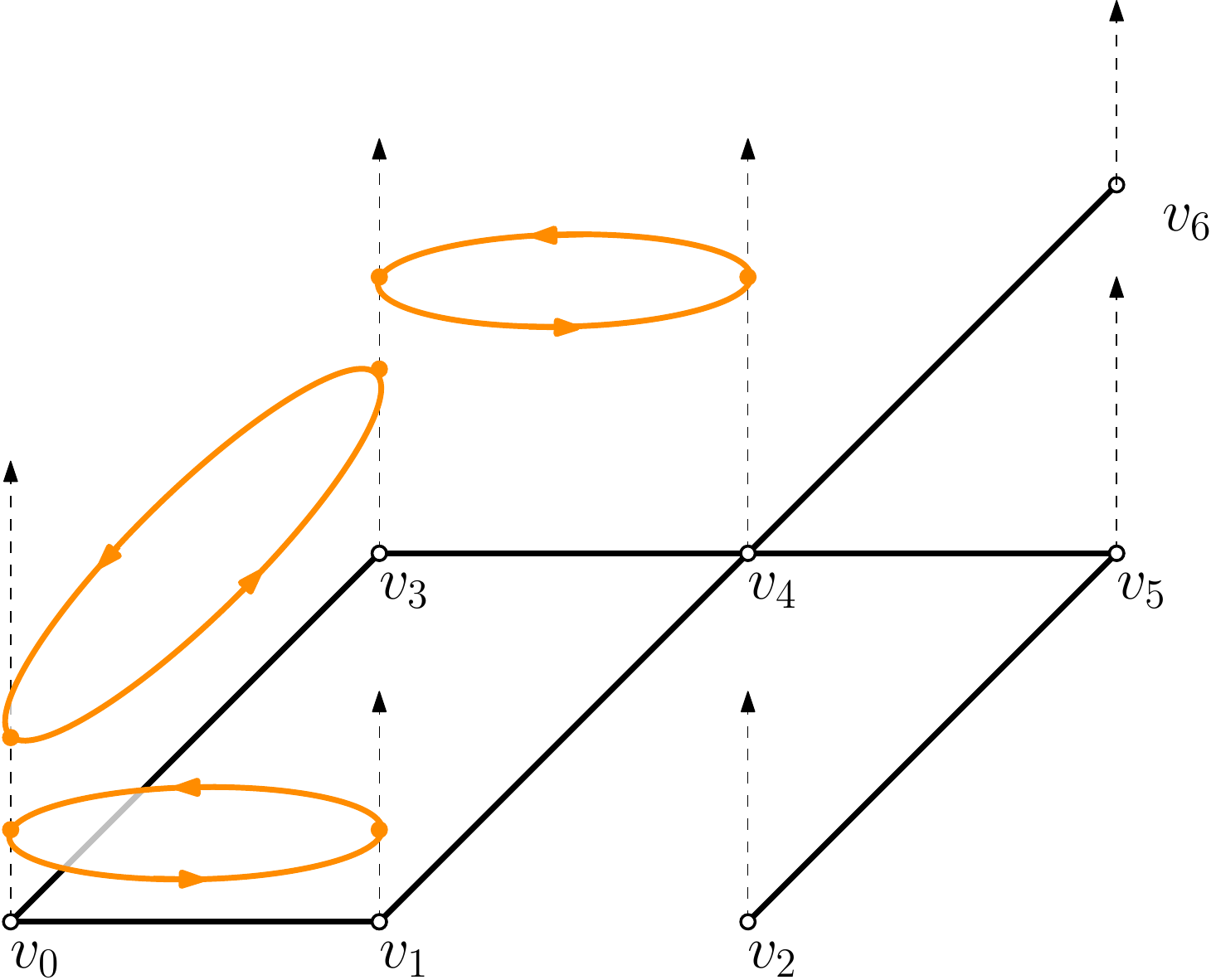}
  	\caption{Left: representation of a heap of cycles as in \Cref{defi:heapc}. Center: algorithm to form the classical heap of cycles started from $v_3$. Right: representation of a classical heap of cycles (there are some cycles on top of the others). Cycles here have length 2, but they may be of any length in general.}
  	\label{Cycleheaps}
  \end{figure}
\end{rem}
For a trivial heap of cycles $H=( H_u,u\in V)$, define
\be
{\sf SignedWeight}(H)= {\sf Weight}(H)(-1)^{|{\sf Cycles}(H)|}.
\ee
We have the following inversion formula
\ben\label{eq:qdq2}
\sum_{H\in {\sf HCycles}({\sf free}\ni f)}{\sf Weight}(H)= \left(\sum_{H\in{\sf HCycles}^{{\sf Trivial}}({\sf free}\ni f)} {\sf SignedWeight}(H)\right)^{-1} \stackrel{\eref{eq:fqs}}{=} \det\l(\Id-M^{(f)}\r)^{-1}.
\een
This formula is famous in combinatorics (see \cite{VX,CK}). A simple proof : we will prove that the product of the sums of the two first members of \eref{eq:qdq2} gives 1. For this associate to each pair $(h,t)$ of heap of cycles and trivial heap of cycles  (an element of ${\sf HCycles}({\sf free}\ni f)\times {\sf HCycles}^{{\sf Trivial}}({\sf free}\ni f)$),  a pair $(h',t')$ of the same type obtained as follows: Take the greatest cycle $c$ in the union of $t$ and of the set of cycles of $h$ having no cycle above them and which moreover do not  intersect the cycles of $t$. Now, to obtain $(h',t')$ from $(h,t)$, remove $c$ from the component which contains it, and move it to the other (if it is move on $h$, it is placed on top of it). This operation is an involution which changes the sign of the weight, so that the total weight of the pairs $(h,t)$ coincides with the weight of the single pair $(\varnothing,\varnothing)$ which is 1.

\subsection{Step 2. Collection of heaps of a covering path corresponding to a given tree}

Consider the subset of $\Paths(r)$, 
\[\Paths(t,r,f):=\{w \textrm{ covering }: \FET(w)=(t,r), \underline{w}=r, \overline{w}=f\}.\]
The passport of any path $w \in  \Paths(t,r,f)$, satisfies,
\ben\label{eq:fejfks}
\OUTIN(\Heap(w))= [ (N_u+1_{u=f},u \in V),(N_u+1_{u=r},u \in V)],
\een
for  $N_{f}=0$, some $N_r\geq 0$, and some $N_u \geq 1$ for all $u\notin\{f,r\}$.
\par
From the $\FET$ construction, the last visited vertex is a leaf:
\begin{lem}
  If $f$ is an internal node of $t$, then $\Paths(t,r,f)=\varnothing$.
\end{lem}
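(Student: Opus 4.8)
The plan is to prove the contrapositive: starting from an arbitrary $w\in\Paths(t,r,f)$, I will show that its endpoint $f=\overline{w}$ must be a leaf of $(t,r)$, so that no such $w$ can exist when $f$ is internal. The whole argument rests on a single observation about the covering property. By definition of $\Paths(t,r,f)$ the path $w$ is covering with $\underline w=r$ and $\overline w=f$, so $m:=|w|=\tau_{|V|}(w)$ and $w_m=f=w_{\tau_{|V|}}$; thus $f$ is the $|V|$-th (hence last) distinct vertex to be discovered, and it is reached for the \emph{first} time exactly at the final step $m$. In particular $f$ occupies no index of $w$ other than $m$: if $f=w_j$ for some $j<m$, then $f$ would have been visited strictly before time $\tau_{|V|}$, contradicting that $\tau_{|V|}$ is the first time $f$ appears.

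Next I would feed this into the description \eref{eq:fet} of $\FET(w)=(t,r)$: the edges of $(t,r)$, oriented towards the root, are precisely the pairs $(w_{\tau_k},w_{\tau_k-1})$ for $2\le k\le |V|$, so that $p(w_{\tau_k})=w_{\tau_k-1}$. Assume, for contradiction, that $f$ is internal, i.e. $\deg{(t,r)}{f}\ge 1$. Then $f$ has a child $u$, which means $(u,f)\in E(t,r)$ and $p(u)=f$. By the edge description there is an index $k$ with $w_{\tau_k}=u$ and $w_{\tau_k-1}=f$; in particular the walk sits at $f$ at time $\tau_k-1$.

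It then remains to close the loop by a short index chase. Since $u$ is a child of $f$ we have $u\neq f$, and because $f=w_{\tau_{|V|}}$ is the last vertex discovered, $u$ is discovered strictly earlier; concretely $u=w_{\tau_k}$ with $k<|V|$, whence $\tau_k<\tau_{|V|}=m$ and so $\tau_k-1<m$. But $w_{\tau_k-1}=f$ now exhibits an occurrence of $f$ at a time strictly before $m$, contradicting the first-paragraph observation that $f$ appears in $w$ only at index $m$. Hence $f$ is the parent of no vertex, $\deg{(t,r)}{f}=0$, and $f$ is a leaf; contraposing yields $\Paths(t,r,f)=\varnothing$ whenever $f$ is internal. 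I do not anticipate a real obstacle here: the one point that needs care is the bookkeeping of the hitting times $\tau_k$, specifically confirming that a child of $f$ is discovered \emph{strictly} before $f$ (giving $\tau_k<m$ rather than merely $\tau_k\le m$), which is exactly what excludes the degenerate possibility $\tau_k-1=m$. As a consistency check, the same conclusion can be read directly off the passport \eref{eq:fejfks}: since $f\neq r$ (as $|V|\ge 2$ and $f$ is discovered last) we have ${\sf In}_f(\Heap(w))=N_f=0$, i.e. no step of $w$ leaves $f$; in particular no first-entrance step emanates from $f$, so $f$ has no children and is a leaf.
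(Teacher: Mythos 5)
Your proof is correct and is essentially the argument the paper intends: the paper gives no written proof, only the remark that ``from the $\FET$ construction, the last visited vertex is a leaf,'' and your index chase (the endpoint $f=w_{\tau_{|V|}}$ is discovered at the very last step, so no step of $w$ ever leaves $f$, so $f$ is nobody's parent) is precisely a careful elaboration of that remark. No gaps; the closing consistency check via the passport \eref{eq:fejfks} is fine as a sanity check, though it should not replace the main argument since that passport identity itself encodes the same fact.
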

All paths $w\in \Paths(t,r,f)$ have a step from $p(u)$ to $u$, for each node $u\neq r$. The oriented edges of $(t,r)$ can be seen as a collection of heaps $H^t=(H_u^t, u \in V)$ defined by:
\be
\left\{\begin{array}{ccl}H_u^t &=& [(u,p(u)], \textrm{ for }u \neq r,\\
H_{r}^{t}&=& \varnothing, \end{array}\right.
\ee 
whose passport is 
 \ben\label{eq:dqsdfqs}
 \OUTIN(H^t)=\l[ \l(1-1_{u=r}, u \in V\r), \l(\deg{(t,r)}{u}, u \in V\r)\r].
 \een 
By construction, the edges of $H^t$ are the first edges of the collection $\Heap(w)$ (we put them according to their order in $w$), and then can be removed using the remove prefix operation; it produces the collection of ``truncated heaps'' defined by removing the edges of $t$ from $\Heap(w)$ 
\ben\label{eq:dec}
\Heap^{\sf Trunc}(w):= \Heap(w)\ominus_{PR}H^{t}(w).
\een
If \eref{eq:fejfks} holds, in view of \eref{eq:dqsdfqs}, the passport of $\Heap^{\sf Trunc}(w)$ is
 \ben\label{eq:dqsuld}
 \OUTIN\l(\Heap^{\sf Trunc}(w)\r)= [ (N_u+1_{u=f}-(1-1_{u =r}),u \in V),(N_u+1_{u=r}-\deg{(t,r)}{u},u \in V)].
 \een
 One of the main point of the proof is the following Lemma, which says that we completely know the set of collections of {\bf truncated} heaps  $\Heap^{{\sf Trunc}}\l[  \Paths(t,r,f)\r]$.
 In words, they correspond, prior to truncation, to all collections $\Heap(w)$ satisfying \eref{eq:fejfks} and large enough so that $H^t$ can be removed.
Set
\be
\HC[ (o_u,u\in V),(i_u,u\in V)]:=\{h \in \HC: \OUTIN(h)=[(o_u,u\in V),(i_u,u\in V)]\}
\ee
\begin{lem}\label{lem:key}
  If $f\in \partial t$, the set $\Heap^{{\sf Trunc}}\l[  \Paths(t,r,f)\r]$  is equals to 
  \ben\label{eq:qdsdhtegzrf}
  \Xi(t,r,f):=\bigcup \HC\l[(n_u + (\deg{(t,r)}{u}-1)\1_{u\in t^o} , u \in V), (n_u+ \1_{u \in \partial t}(1-1_{u=f}), u \in V)\r]\een 
  where the union is taken on all $n_u\geq 0, \forall u\neq f$, and $n_{f}=0$ (no incoming or outgoing edge at $f$, while $n_u+\1_{u \in \partial t}(1-1_{u=f})>0$ for the other leaves).
\end{lem}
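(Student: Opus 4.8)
The plan is to prove the set equality $\Heap^{{\sf Trunc}}[\Paths(t,r,f)]=\Xi(t,r,f)$ by establishing the two inclusions separately. First I would record that, since the numbers $n_u$ in the definition of $\Xi(t,r,f)$ are free, membership in $\Xi(t,r,f)$ is really a condition on the passport of a collection $h$: namely that $h$ has no edge incident to $f$ (so $\Out_f(h)={\sf In}_f(h)=0$), that $\Out_u(h)-{\sf In}_u(h)=(\deg{(t,r)}{u}-1)\1_{u\in t^o}-\1_{u\in\partial t}(1-1_{u=f})$ for every $u$, and that ${\sf In}_u(h)\ge 1$ for every leaf $u\neq f$, the latter being exactly the condition $n_u\ge 0$ with $n_u:={\sf In}_u(h)-\1_{u\in\partial t}(1-1_{u=f})$.

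For the inclusion $\subseteq$, I would start from \eref{eq:dqsuld}, which already gives the passport of $\Heap^{{\sf Trunc}}(w)$ for $w\in\Paths(t,r,f)$, and read off the matching $n_u$ from its ``In'' component. A short computation using the passport \eref{eq:dqsdfqs} of $H^t$ shows that the ``Out'' component is then automatically the one prescribed by $\Xi(t,r,f)$, so the only genuine point is the nonnegativity $n_u\ge 0$. For a leaf $u\neq f$ this is immediate since $N_u\ge 1$ by \eref{eq:fejfks} (recall $r\notin\partial t$). For an internal node $u$ it is the one combinatorial observation in this direction: since $\FET(w)=(t,r)$, each of the $\deg{(t,r)}{u}$ children of $u$ is entered for the first time directly from $u$, so $w$ makes at least $\deg{(t,r)}{u}$ distinct exit-steps out of $u$; as ${\sf In}_u(\Heap(w))=N_u+1_{u=r}$ counts exactly the exits from $u$, this yields $N_u+1_{u=r}\ge \deg{(t,r)}{u}$, i.e.\ $n_u\ge 0$.

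The substantial direction is $\supseteq$. Given $h\in\Xi(t,r,f)$ I would set $H:=H^t\oplus h$ and prove that $H$ lies in the image of $\Heap$, i.e.\ that the decoding procedure of Lemma \ref{lem:ujkgkuj} reconstructs a genuine walk $w$ with $\Heap(w)=H$; then $\Heap^{{\sf Trunc}}(w)=H\ominus_{PR}H^t=h$ by definition. The passport of $H$ is balanced at every vertex except $f$ (one excess outgoing edge) and $r$ (one excess incoming edge), so the backward decoding started at $f$ can only ever halt at $r$, since at any other vertex each arrival is matched by an available departure. \textbf{The main obstacle} is the surjectivity onto the image proper: ruling out that the decoding halts at $r$ while leaving unread edges behind ($\Heap$ is injective but far from surjective, so this must genuinely be argued). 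Here I plan to exploit that the tree edges $H^t$ form the \emph{bottom} of every heap, hence are read last when reading top-to-bottom. The set $L$ of unread edges is balanced, hence a disjoint union of cycles, and if a vertex $u$ carries any unread edge then its bottom edge, the tree edge $(u,p(u))$, is unread too, so $(u,p(u))\in L$ and therefore $p(u)\in L$. Choosing $u\in L$ of minimal depth in $(t,r)$ gives a contradiction, as $p(u)$ has strictly smaller depth, unless $u=r$; but $r\notin L$, because the decoding halts at $r$ only after emptying its heap. Hence $L=\varnothing$ and all edges are consumed.

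Finally I would verify that the reconstructed $w$ has the required features. It starts at $r$ and ends at $f$ by the excess-degree count above, so $\underline{w}=r$ and $\overline{w}=f$. Since $f$ has a single incoming edge and no outgoing edge in $H$, it is visited exactly once, at the very last step, so $w$ is covering with $\tau_{|V|}(w)=|w|$; all other vertices are visited because every tree edge is traversed and $(t,r)$ spans $V$. Lastly, the bottom edge of each $\Heap_u(w)=H_u$ is by construction the tree edge $(u,p(u))$, and the bottom edge records the first entrance into $u$; hence the first entrance into each $u\neq r$ is from $p(u)$, giving $\FET(w)=(t,r)$. This places $w$ in $\Paths(t,r,f)$ and completes the inclusion $\supseteq$.
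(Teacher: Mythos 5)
Your proof is correct and follows essentially the same route as the paper's: the passport change of variables for the inclusion $\subseteq$, and for $\supseteq$ the decoding of $H^t\oplus h$ started at $f$, with the key observation that the tree edges sit at the bottom of the heaps so that any leftover edge would force an unread edge at the parent, propagating up to the root, which is impossible. Your minimal-depth contradiction is just the paper's root-to-leaves emptiness induction run in the opposite direction, and your explicit check that the reconstructed walk indeed lies in $\Paths(t,r,f)$ is a welcome extra detail.
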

Lemma \ref{lem:key} describes the set of heaps that are not circled in \Cref{heaps} (in all generality).
\begin{proof} \bls Proof of $\Heap^{{\sf Trunc}}\l[  \Paths(t,r,f)\r]\subset \Xi(t,r,f)$. This inclusion can be seen thanks to a  change of variables from \eref{eq:dqsuld}. Indeed, from
	   \eref{eq:dqsuld} we must have $N_u-\deg{(t,r)}{u}+1_{u=r}\geq 0$ and $N_u+1_{u=f}-1+1_{u =r}\geq 0$, which implies
            \[N_u \geq \l(\deg{(t,r)}{u}-1_{u=r}\r)  \1_{u\in t^o}+\1_{u\in \partial t} (1-1_{u=f}) \]
           since  $\deg{(t,r)}{u}\geq 1$ when $u\in t^o$ and $\deg{(t,r)}{u}=0$ when $u\in\partial t$.
    We then set
    \[n_u:= N_u-\l[ \l(\deg{(t,r)}{u}-1_{u=r}\r)  \1_{u\in t^o}+\1_{u\in \partial t} (1-1_{u=f})\r]\]
      and compute the passport \eref{eq:dqsuld} after this change of variable to conclude this inclusion.\\
\bls Now, take $h=(h_u, u \in V)\in \Xi(t,r,f)$ and consider
 $H:=H^t\oplus h$ the potential unique collection of heaps such that  $\Heap^{{\sf Trunc}}\l[H\r]=h$.
 Let us prove that there exists a path $w\in \Paths(t,r,f)$ such that $\Heap(w)=H$.
 As explained in the proof  of Lemma \ref{lem:ujkgkuj}, the collection of heaps $H$ can be used to construct a path: denote by $p$ the current point and $\ra{w^\star}$ the current path which  are initialized as $p=f$ and $\ra{w^\star}=(f)$.\\
 $(a)$ If $H_p$ is empty then stop the construction and ``return $\ra{w^{\star}}$''. If $H_p$ is not empty, consider the last edge $(p,u)$ of the heap $H_p$.\\
 $(b)$ Remove $(p,u)$ from $H_p$, add $(p,u)$ as last step in $\ra{w^\star}$. \\
 $(c)$ Set $p=u$ and go to $(a)$.

 This construction stops when a vertex $p$ with an empty heap $H_p$ is reached. To conclude the proof, taking into account Lemma \ref{lem:ujkgkuj}, it suffices to prove that all the heaps are empty when this event occurs (since it is necessary and sufficient for $H$ to encode a path). \par
  We claim that indeed, all the heaps are empty and moreover $p=r$.
 Observe first that the global passport of $H$ is obtained by doing $\OUTIN(H^t)+\OUTIN(h)$,
 which gives for some $(n_u, u\in V)$, 
 \be
 &&\left[(n_u + (\deg{(t,r)}{u}-1)\1_{u\in t^o}+1-1_{u=r} , u \in V),(n_u+ \deg{(t,r)}{u}+\1_{u \in \partial t}(1-1_{u=f}), u \in V)\right] \\
 &=&\left[(n_u +\deg{(t,r)}{u}+ 1_{u\in\partial t}-1_{u=r} , u \in V),(n_u+ \deg{(t,r)}{u}+\1_{u \in \partial t}-1_{u=f}, u \in V)\right].\ee 
Up to a change of variables, this matches the passport of a path (see \eref{eq:fejfks}); however  \eref{eq:fejfks} does not characterize collection of heaps corresponding to paths, but collection of heaps of paths concatenated with heap of cycles (see Defi. \ref{defi:heapc} and \eref{eq:dqsuld2}). We then need to discard the possibility of cycles.
Notice that each vertex different from $r$ has as many outgoing as incoming edges, so that necessarily $p=r$.
We will prove that if the heap $H_v$ of an internal node $v$ is empty, then it is also the case of the heaps of its children. Indeed, if $H_{v}$ is empty, then, all the outgoing edges out of $v$ have been used, and therefore all the incoming edges to $v$ must have also been used too! (one always needs to enter before exiting). Therefore when $H_v$ is empty all the incoming edges arriving at $v$ have been used in their respective heaps. As a matter of fact, the leftmost edge of the heap $H_u$ for each child $u$ of $v$, is the edge $(u,v)$, and this edge is incoming at $v$: as a conclusion when $H_v$ is empty, $H_u$ is empty too.
\end{proof} 
By Lemma \ref{lem:key},
\begin{cor}\label{eq:cor9} For any $(t,r)\in \SP^{\bullet}(G)$,
	\be
	\P(\FET(w_0,\cdots,w_{\tau_{|V|}(w)}) =(t,r)) &=& \W(H^t) \sum_{f \in \partial t} \rho_f \W(\Xi(t,r,f))\\
	&=& \Bigl(\prod_{e\in E(t,r)} \ra{M_e}\Bigr) \sum_{f \in \partial t} \rho_f\W(\Xi(t,r,f)).
	\ee
\end{cor}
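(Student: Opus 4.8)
The plan is to expand $\P(\FET(W)=(t,r))$ as a sum over the covering paths $w$ realizing $(t,r)$, to rewrite each path's probability in terms of the heap weight $\W$, and then to reorganize the resulting sum using the two preceding lemmas. Throughout I take $W$ under its stationary regime, so that the event $\{\FET(W)=(t,r)\}$ already forces $\underline{w}=r$ and
\[\P(\FET(W)=(t,r))=\sum_{f\in V}\ \sum_{w\in\Paths(t,r,f)}\P(W=w).\]
The Lemma stating that $\Paths(t,r,f)=\varnothing$ when $f$ is an internal node lets me restrict the outer sum to $f\in\partial t$.

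The first key step is the reversal identity \eref{eq:dqsd22}: writing $M_{w_i,w_{i+1}}=\ra{M}_{w_{i+1},w_i}\,\rho_{w_{i+1}}/\rho_{w_i}$ and telescoping the ratio of invariant weights along $w$ gives, under stationarity,
\[\P(W=w)=\rho_{\underline{w}}\prod_{i=0}^{|w|-1}M_{w_i,w_{i+1}}=\rho_{\overbar{w}}\,\W(w)=\rho_f\,\W(w)\]
for every $w\in\Paths(t,r,f)$, since $\underline{w}=r$ and $\overbar{w}=f$, the starting weight $\rho_r$ cancelling the telescoped factor $\rho_f/\rho_r$. Thus the probability equals $\sum_{f\in\partial t}\rho_f\sum_{w\in\Paths(t,r,f)}\W(w)$. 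Next I pass from paths to collections of heaps: by Lemma \ref{lem:ujkgkuj} the map $\Heap$ is a weight-preserving injection, so $\W(w)=\W(\Heap(w))$ and the inner sum equals $\sum_{H\in\Heap[\Paths(t,r,f)]}\W(H)$. Since $H^t$ is a prefix of $\Heap(w)$ for every such $w$ and $\W$ is multiplicative for $\oplus$, I factor $\W(\Heap(w))=\W(H^t)\,\W(\Heap^{\sf Trunc}(w))$ and pull $\W(H^t)$ out of the sum. Because prepending the fixed prefix $H^t$ inverts the truncation, $w\mapsto\Heap^{\sf Trunc}(w)$ is injective, and Lemma \ref{lem:key} identifies its image with $\Xi(t,r,f)$; hence it is a weight-preserving bijection onto $\Xi(t,r,f)$, so $\sum_{w\in\Paths(t,r,f)}\W(\Heap^{\sf Trunc}(w))=\W(\Xi(t,r,f))$. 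This yields the first displayed equality.

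It remains to evaluate $\W(H^t)$, which is routine: from $H^t_u=[(u,p(u))]$ for $u\neq r$ and $H^t_r=\varnothing$, the definition of the heap weight gives $\W(H^t)=\prod_{u\in V\setminus\{r\}}\ra{M}_{u,p(u)}=\prod_{e\in E(t,r)}\ra{M_e}$, establishing the second equality. The main obstacle is not the bookkeeping but the reversal step: one must verify that passing from the genuine transition product $\prod_i M_{w_i,w_{i+1}}$ to the weight $\W(w)$ built from $\ra{M}$ produces exactly the telescoping factor $\rho_f/\rho_r$, and that the stationary starting weight $\rho_r$ cancels it to leave the clean factor $\rho_f$. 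Equally delicate is checking that $\Heap^{\sf Trunc}$ is a genuine bijection onto $\Xi(t,r,f)$ and not merely a surjection, so that the sum over paths becomes precisely the sum defining $\W(\Xi(t,r,f))$.
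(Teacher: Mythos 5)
Your proof is correct and is exactly the intended argument: the paper derives the corollary directly from Lemma \ref{lem:key} (with Lemma \ref{lem:ujkgkuj}, the prefix factorization of $\W$, and the time-reversal identity $\P(W=w)=\rho_{\overbar{w}}\W(w)$ doing the same work you make explicit). Your telescoping verification of the reversal step and the observation that $\Heap^{\sf Trunc}$ is a weight-preserving bijection onto $\Xi(t,r,f)$ are precisely the details the paper leaves implicit.
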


\subsection{The combinatorial trick}
\label{sec:TCT}
In the preceding sections we use collections of heaps to present some operations that could have been presented without them, since withdrawing the steps of the first entrance trees could have been done directly on $w$. From here, collections of heaps becomes the central objects: just consider a moment the collections of heaps $\Heap^{\sf Trunc}(w)$ which comes from $w$, and have been defined using $t$. From now on, the economic way to think about them is to forget where they come from ! They are just elements $(H_u,u\in V)$ of some sets of collections of heaps whose passports satisfy some constraints (see Proposition \ref{pro:heapinj}). When seen as coming from $w$ and $\ra{w}$, it is tempting to equip the edges of $(H_u,u\in V)$ with a global order inherited from $w$, but this way of thinking drove us in some dead ends. \par
Collections of heaps are simple object, devoid of global ordering, even if each heap comes with a fixed order. In the sequel, in order to make clear that we don't try to follow in any way the order induced by $w$, we still call ``first edge'' of $H_u$ the leftmost one (the bottom on the figures in the representation of heaps, for example, \Cref{heaps}). All our algorithms will use ``first edges first'': hence since the edges of $t$ have been removed, following paths constructed using the first edges of the heaps produce paths that do not correspond to portions of $w$ or of $\ra{w}$ in general.\\
The general principle is that one can do many things with collections of heaps. Including playing golf?  Yes.

\subsection{Combinatorial golf sequences decomposition}

In the sport called \textit{golf}, players hit balls into a series of holes. 
We introduce {\bf the combinatorial golf sequence}, a new combinatorial object: it consists in a sequence of paths on $G$, modeling the moves of some balls (placed on some vertices) on a golf court (the graph $G$), until each of them finds a different \textit{empty hole} (some special vertices). It is parameterized by the pair $\Big[\HoleSet, S\Big]$ as follows: \\
\tb~ $S=(S_1,\cdots,S_{\Nbb})$  is the {\it sequence of starting vertices} from which balls are to be played,
with $S_j$, being the starting position of  the  $j$-th. 
We denote by  $\Nb_u:=|\{j: S_j=u\}|$ the number of balls starting at $u$, $\Nbb=\sum_u \Nb_u$, the total number of balls;  $(\Nb_u,u\in V)$ is called the  {\it counting sequence}  of $S$.\\
\tb~ $\HoleSet$ is a subset of $V$, the set of empty holes, and we require $u \in \HoleSet \imp \Nb_u=0$ so that a node which is the ending point of a path, is not the starting of any paths. 
\begin{defi}\label{def:fdqs}
A sequence of paths $(w^{(1)},\cdots,w^{(|\Nb|)})$ is an element of ${\sf GolfSequence}\Big[\HoleSet, S\Big]$ iff:\\
-- for any $j$, $w^{(j)}=\l(w^{(j)}_i, 0\leq i \leq \l|w^{(j)}\r|\r)$ is a finite path starting at $S_j$, i.e. $w^{(j)}\in \Paths(S_j)$, \\
-- the final positions of each path is a hole, meaning that
\[{\sf Final}(i):=w^{(i)}(|w^{(i)}|)\in \HoleSet, \textrm{ for all } 1\leq i \leq \Nbb.\]
-- each hole captures the first ball visiting it, and looses after that the property of being a hole. It becomes a standard vertex for subsequent paths. This means that the vertices ${\sf Final}(i)$ are distinct, and:\\
  \tb~ the time $|w^{(1)}|$ is the hitting time of $\HoleSet$ by $w^{(1)}$,\\
  \tb~ ... the time $|w^{(k)}|$ is the hitting time of $\HoleSet\setminus \{{\sf Final}(1),\cdots,{\sf Final}(k-1)\}$ by $w^{(k)}$, for each $k$.
\end{defi}
\begin{rem}
  ${\sf GolfSequence}\Big[\HoleSet, S\Big]$ is empty if $\Nbb> |\HoleSet|$, and non-empty if $\Nbb\leq |\HoleSet|$.
  \end{rem}

\centerline{------------------------------------}
\paragraph{The stochastic golf sequence.} We call ${\sf StochasticGolfSequence}\Big[\HoleSet, S\Big]$, a random variable $X:=(X^{(1)},\cdots,X^{(\Nbb)})$ taking its values in ${\sf GolfSequence}\Big[\HoleSet, S\Big]$ distributed as follows. For any $j\in \{1,\cdots,\Nbb\}$, conditionally on $\{X^{(1)},\cdots,X^{(j-1)}\}$, the trajectory of the $j$th ball $X^{(j)}:=(X_t^{(j)},t\geq 0)$ is a Markov chain with kernel $\ra{M}$ started at $S_{j}$ and killed at the first element of $\HoleSet$ not present in the already killed trajectories $(X^{(k)},k\leq j-1)$.\\
\centerline{------------------------------------}\\
The weight of a combinatorial golf sequence $(w^{(1)},\cdots,w^{(\Nbb)})$ is set as
\be\W\l[w^{(1)},\cdots,w^{(\Nbb)}\r]:= \prod_{j=1}^{\Nbb} \W\l(w^{(j)}\r)=\prod_{j=1}^{\Nbb} \prod_{i=0}^{\l|w^{(j)}\r|-1} \ra{M}_{w_{i}^{(j)},w_{i+1}^{(j)}}\ee
which then corresponds to the probability of independent trajectories in the stochastic golf sequence using the kernel $\ra{M}$, killed at the first time it visits a still available hole.

Denote by ${\sf GolfSequence}\Big[ \HoleSet, S, {\sf free}\ni f\Big]$ the subset of ${\sf GolfSequence}\Big[ \HoleSet,S\Big]$ leaving the hole $f$ empty, meaning that $f\not\in\{{\sf Final}(i),1\leq i \leq \Nbb\}$).
\begin{pro}\label{theo:ts}[Trivial stochastic golf sequence property]~\\
  Let $X=(X^{(1)},\cdots,X^{(\Nbb)})$ be a ${\sf StochasticGolfSequence}\Big[\HoleSet, S\Big]$ with $\Nbb\leq |\HoleSet|$. 
  \bir\compact
  \itr We have  $\P\l(X \in {\sf GolfSequence}\Big[\HoleSet, S\Big]\r)=1$.
  \itr If $\Nbb=|\HoleSet|-1$ (eventually a single hole will remains free with probability 1) then
  \[\sum_{f \in {\HoleSet}} \P\Big(X \in {\sf GolfSequence}\Big[ \HoleSet, S, {\sf free}\in f\Big]\Big)=1.\]
  \eir
  \end{pro}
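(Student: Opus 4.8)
The plan is to reduce both statements to a single elementary fact: an irreducible Markov chain on a finite state space hits any non-empty target set in finite time almost surely. Since $M$ is positive on the connected graph $G$, so is its reversal $\ra{M}$ by \eref{eq:dqsd22} (the invariant weights $\rho_v$ are all positive, and $\ra{M}_{x,y}>0 \iff M_{y,x}>0 \iff \{x,y\}\in E$); hence $\ra{M}$ is an irreducible kernel on the finite set $V$, the chain it drives is recurrent, and starting from any vertex it visits every state infinitely often a.s. In particular, for any non-empty $A\subset V$ and any starting vertex, the hitting time of $A$ is finite almost surely. This is the only probabilistic input.

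For (i) I would argue sequentially, following the very definition of the stochastic golf sequence. When the $j$-th ball is played ($1\leq j\leq \Nbb$), exactly $j-1$ holes have been captured by the previous trajectories, so the set of still-available holes is $\HoleSet\setminus\{{\sf Final}(1),\cdots,{\sf Final}(j-1)\}$, of cardinality $|\HoleSet|-(j-1)\geq |\HoleSet|-\Nbb+1\geq 1$, where the last inequality uses $j\leq\Nbb\leq|\HoleSet|$. This target being non-empty, the hitting-time fact applied conditionally on $(X^{(1)},\cdots,X^{(j-1)})$ shows that $X^{(j)}$ reaches it in finite time a.s.; thus $X^{(j)}$ is a.s. a finite path ending at a hole distinct from the previous ones. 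Intersecting these $\Nbb$ almost-sure events (a finite intersection) yields that, with probability $1$, $X$ is a finite sequence of finite paths whose final positions are distinct elements of $\HoleSet$, each $|X^{(k)}|$ being exactly the hitting time of the remaining holes, i.e. $X\in{\sf GolfSequence}[\HoleSet,S]$.

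For (ii) I would observe that when $\Nbb=|\HoleSet|-1$, any element of ${\sf GolfSequence}[\HoleSet,S]$ captures exactly $\Nbb=|\HoleSet|-1$ distinct holes, so exactly one hole remains free. Consequently the events $\{X\in{\sf GolfSequence}[\HoleSet,S,{\sf free}\ni f]\}$, indexed by $f\in\HoleSet$, are pairwise disjoint, and on the full-probability event $\{X\in{\sf GolfSequence}[\HoleSet,S]\}$ from (i) exactly one of them occurs. Hence their probabilities sum to $\P(X\in{\sf GolfSequence}[\HoleSet,S])$, which equals $1$ by (i).

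I do not expect a genuine obstacle here: the only point requiring care is the conditional nature of the construction in (i), where one applies the hitting-time fact to the $j$-th chain conditionally on the configuration of holes left captured by the first $j-1$ chains, and checks that this configuration always leaves at least one target available. This is precisely where the hypothesis $\Nbb\leq|\HoleSet|$ enters; everything else is bookkeeping.
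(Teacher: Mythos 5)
Your proposal is correct and follows essentially the same route as the paper's own proof, which likewise deduces (i) from the almost-sure finiteness of the paths by irreducibility and (ii) from the fact that exactly one hole must remain free, so the events indexed by $f$ partition the almost-sure event of (i). You simply spell out in more detail the conditional bookkeeping (non-emptiness of the remaining target set at each step) that the paper leaves implicit.
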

  \begin{proof} 
  	When $\Nbb\leq |\HoleSet|$, the stochastic golf sequence is well defined and takes its value in  ${\sf GolfSequence}\Big[\HoleSet, S\Big]$: the paths are a.s. finite by irreducibility of the Markov chains. The second statement follows since a single hole must remain free at the end.
    \end{proof}

\begin{pro} \label{pro:heapinj}For any parameters $(\HoleSet,S)$ such that $u\in \HoleSet\imp \Nb_u=0$,
  the map
  \ben\label{eq:injse}\app{\Heap}{{\sf GolfSequence}\Big[  \HoleSet, S\Big]}{\HC}{\l(w^{(1)},\cdots,w^{(\Nbb)}\r)}{\Heap\l(w^{(1)} \r)\oplus \cdots \oplus \Heap\l(w^{(\Nbb)}\r)}\een
  is a weight preserving injection, and the image is included in
  \ben\label{eq:dqhet}\bigcup_{A:|A|=\Nb\atop{ A\subset \HoleSet}} \bigcup_{n_u\geq 0} \HC\l[(n_u + \Nb_u , u \in V),(n_u+ \1_{u \in A}), u \in V)\r]\een
\end{pro}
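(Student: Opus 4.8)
The plan is to verify the three assertions of the proposition separately --- weight preservation, the image/passport constraint, and injectivity --- the first two being routine and the last being the real obstacle. Weight preservation is immediate: the weight $\W$ is multiplicative under the concatenation $\oplus$ (it is a product over the edges of each heap), so $\W\!\left(\Heap(w^{(1)})\oplus\cdots\oplus\Heap(w^{(\Nbb)})\right)=\prod_{j}\W(\Heap(w^{(j)}))$, and each factor equals $\W(w^{(j)})$ by Lemma \ref{lem:ujkgkuj}; the resulting product is exactly the golf-sequence weight. For the image I would sum the single-path passports: each $w^{(j)}$ runs from its start $S_j$ to its capturing hole ${\sf Final}(j)$, so by the same reasoning as in \eref{eq:fejfks} the passport of $\Heap(w^{(j)})$ is balanced except for one excess unit at each of its two endpoints. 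Summing over $j$, the starts accumulate the surplus $\Nb_u$ and the endpoints accumulate $\1_{u\in A}$, where $A=\{{\sf Final}(j):1\le j\le\Nbb\}$; here $|A|=\Nbb$ because the golf rule forces the ${\sf Final}(j)$ to be pairwise distinct, and $A\subset\HoleSet$. Writing the common balanced part as $n_u\ge0$ recovers precisely the passports listed in \eref{eq:dqhet}; nonnegativity holds since each start $u$ is visited at least $\Nb_u$ times and each captured hole at least once, and since holes start no ball these two surpluses never overlap.

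The main obstacle is injectivity. Because $\Heap$ is already injective on individual paths (Lemma \ref{lem:ujkgkuj}), it suffices to recover the decomposition of the concatenated heap $H$ into its $\Nbb$ blocks from $H$ together with the data $(\HoleSet,S)$ alone, and I would do this by peeling the balls off in the order they are played. Since $\Heap(w^{(1)})$ occupies the bottom (prefix) of every heap, I reconstruct $w^{(1)}$ by tracing from $S_1$ through the steps recorded in the heaps --- reading first (bottom) edges first --- and halting at the first vertex of $\HoleSet$ that is met; I then strip this block off with the prefix removal $\ominus_{PR}$ and recurse on $(S_2,\dots,S_{\Nbb})$ with $\HoleSet$ diminished by the just-captured hole. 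A straightforward induction on $\Nbb$ then identifies the full sequence, the remaining heap being a golf-sequence heap for the shortened parameters.

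What must actually be proved --- and where essentially all the work lies --- is that this peeling is well defined and order-faithful: that at each vertex the next edge is forced, that the trace halts exactly at ${\sf Final}(1)$, and that it consumes precisely $\Heap(w^{(1)})$ and not a single edge belonging to a later block. The golf-killing rule is exactly the hypothesis that supplies the needed rigidity: the interior of each $w^{(j)}$ avoids every still-available hole and the ball is absorbed by the first available one it reaches, so no ball ever traverses an available hole. This constrains how the edges incident to the holes are stacked in the heaps and is what prevents the first-edge trace from drifting out of its block or from overshooting its capturing hole. Converting this structural rigidity into a rigorous uniqueness statement for the reconstruction is the crux of the argument; by comparison the weight and passport computations above are mere bookkeeping.
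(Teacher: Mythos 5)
Your proposal follows the paper's proof essentially verbatim: the image/passport computation by summing the single-path contributions (one surplus outgoing edge at each $S_j$, one surplus incoming edge at each distinct ${\sf Final}(j)$, whence $A\subset\HoleSet$ with $|A|=\Nbb$), and injectivity by peeling the balls off in playing order --- tracing from $S_1$ with first edges first, halting at the first still-available hole, stripping the consumed prefix with $\ominus_{PR}$, and recursing with the captured hole removed --- is exactly the paper's reconstruction. The ``order-faithfulness'' verification you flag as the crux (that the trace from $S_j$ consumes precisely $\Heap(w^{(j)})$ and halts exactly at ${\sf Final}(j)$, which holds because $\Heap(w^{(j)})$ sits as a prefix below all later blocks and the stopping rule of the reconstruction coincides with the golf-killing rule) is left at the same level of detail in the paper itself, so there is no gap relative to the published argument.
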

\begin{rem} This Proposition is also a key point of our approach. The map
  $\l(w^{(1)},\cdots,w^{(N)}\r)\mapsto\Heap\l(w^{(1)}\r)\oplus \cdots \oplus \Heap\l(w^{(N)}\r)$ is not an injection when defined on a more general set of $\Nbb$-tuple of paths; but for golf sequences, it is.
\end{rem}  
\begin{proof} 
  The fact that the image of any golf sequence is included in \eref{eq:dqhet} comes from the fact that for each $u$, $\Nb_u$ paths start at $u$. The result is that, eventually, a subset $A$ of the $\HoleSet$ with cardinality $\Nbb$ is eventually occupied. This gives a collection of heaps in the set \eref{eq:dqhet}. \\
		Take $H:=\Heap\l(w^{(1)}\r)\oplus \cdots \oplus \Heap\l(w^{(\Nbb)}\r)$ an image under this map. It suffices to explain how to recover $w^{(1)}$ from $H$ (and iterate). Recall that $S_1$ and $ \HoleSet$ are known data. To recover $w^{(1)}$ follow steps $(a)$, $(b)$ and $(c)$ in \Cref{lem:key} started at $p=S_1$ with the difference that we stop and return the path the first time $p\in \HoleSet$ and then we define ${\sf Final}(1):=p$. Do the same for $w^{(k)}$, except that the stopping time is now the first time the constructed path hits $\HoleSet \setminus\{ {\sf Final}(1),\cdots,{\sf Final}(k-1)\}$.  At the end, the set of final points $A$ is the set of final points of the golf paths.
  \end{proof}

\subsection{Decomposition of Truncated heaps}

\begin{pro}\label{pro:qklsd} Consider $(\Nb_u,u \in V)$ some non negative integers, $\HoleSet$ a subset of $V$, such that
\be
 |\HoleSet|=\Nbb+1,\textrm{ and such that } u \in \HoleSet \imp \Nb_u=0,
 \ee
 i.e. after the golf sequence a single vertex remains free.
 Finally let $S=(S_1,\cdots,S_{\Nbb})$ be a sequence of starting points with counting vector $(\Nb_u,u\in V)$. Assume that $f \in \HoleSet$.
   There is a weight preserving bijection between $\mathcal{N}:={\Heap}{\sf GolfSequence}\Big[  \HoleSet, S, {\sf free}\ni f\Big]\times{\sf HCycles}({\sf free}\ni f)$ and
 \be   &&\mathcal{M}:=\bigcup_{n_u\geq 0\atop{n_f=0}} \HC\l[(n_u + \Nb_u , u \in V),(n_u+ \1_{u \in \HoleSet\setminus\{f\}}), u \in V)\r].
   \ee
\end{pro}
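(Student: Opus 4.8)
The plan is to realize the bijection as the concatenation map
\[\Phi : (\Heap(g), C) \longmapsto \Heap(g)\oplus C,\]
and to produce its inverse $\Psi$ as a ``golf reconstruction'' that reads the heaps first-edge-first. First I would check that $\Phi$ is well defined into $\mathcal{M}$ and weight preserving. The weight statement is immediate from $\W(H\oplus H')=\W(H)\W(H')$. For the passport, Proposition \ref{pro:heapinj} restricted to golf sequences leaving $f$ free forces the occupied set to be $A=\HoleSet\setminus\{f\}$ (since $|\HoleSet|=\Nbb+1$ and $f$ stays free), so $\Heap(g)$ has out-degrees $m_u+\Nb_u$ and in-degrees $m_u+\1_{u\in\HoleSet\setminus\{f\}}$ for some $m_u\ge 0$ with $m_f=0$; concatenating a balanced $C\in{\sf HCycles}({\sf free}\ni f)$ (with $C_f$ empty) only shifts $m_u\mapsto m_u+\mathrm{out}_u(C)$ and keeps $f$ edgeless, so $\Phi(\Heap(g),C)\in\mathcal{M}$.

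Next, for injectivity and to define the inverse, given $H\in\mathcal{M}$ I would run the reconstruction: for $j=1,\dots,\Nbb$, start at $S_j$ and repeatedly follow the first (leftmost) edge of the current heap, deleting it, until reaching a hole of $\HoleSet$ not yet occupied; this traces $w^{(j)}$. Because the golf edges of any $\Heap(g)$ sit as a prefix (the cycles of $C$ being piled on top, hence to the right), following first edges on $\Heap(g)\oplus C$ reproduces the first-edge reconstruction of $\Heap(g)$ alone, which by Proposition \ref{pro:heapinj} returns exactly $g$ and consumes precisely $\Heap(g)$ before stopping at the holes. Hence $\Psi(\Phi(\Heap(g),C))=(\Heap(g),\,H\ominus_{PR}\Heap(g))=(\Heap(g),C)$, giving injectivity.

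The crux, and the main obstacle, is surjectivity: showing that for an \emph{arbitrary} $H\in\mathcal{M}$ (which is defined by its passport alone, so a priori could be disconnected or pathological) the reconstruction never gets stuck and leaves a genuine heap of cycles. Here I would argue by a balance invariant on passports. Maintain that, after extracting $w^{(1)},\dots,w^{(j-1)}$, the current heap $H^{(j)}$ has $\mathrm{out}_u-\mathrm{in}_u>0$ only at the not-yet-used starting vertices, $\mathrm{in}_u-\mathrm{out}_u=1$ exactly at the still-free holes of $\HoleSet\setminus\{f\}$, and balance everywhere else, in particular at $f$ (which carries no edge) and at already-occupied holes (which become balanced once a ball has consumed their single excess incoming edge). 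This holds for $H^{(1)}=H$ by the defining passport and is preserved by each extraction. Following first edges from $S_j$, a trail that gets stuck at a node $p\neq S_j$ must have used all out-edges of $p$, hence entered $p$ once more than it left, forcing $\mathrm{in}_{p}-\mathrm{out}_{p}\ge 1$ in $H^{(j)}$; by the invariant this occurs only at a still-free hole $\neq f$, where the procedure stops successfully. A short enter/leave count at $S_j$ (where $\mathrm{out}-\mathrm{in}\ge 1$) rules out getting stuck at the source. Since heaps are finite the trail terminates, so it terminates at a free hole, and $f$, being edgeless, is never reached.

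Finally, once all $\Nbb$ balls are placed they occupy $\Nbb$ distinct holes, necessarily all of $\HoleSet\setminus\{f\}$, so the extracted sequence lies in ${\sf GolfSequence}[\HoleSet,S,{\sf free}\ni f]$; and the leftover $C:=H\ominus_{PR}\Heap(g)$ has, by subtracting the two passports, $\mathrm{out}_u=\mathrm{in}_u$ for every $u$ together with $C_f$ empty, i.e. $C\in{\sf HCycles}({\sf free}\ni f)$. Then $\Phi(\Psi(H))=\Heap(g)\oplus\big(H\ominus_{PR}\Heap(g)\big)=H$, so $\Phi$ and $\Psi$ are mutually inverse weight-preserving maps, which is exactly the claim.
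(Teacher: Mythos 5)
Your proposal is correct and follows essentially the same route as the paper: the bijection is the concatenation map, injectivity comes from running the first-edge golf reconstruction of Proposition \ref{pro:heapinj} on the concatenated heap, and surjectivity comes from the passport-balance argument showing a ball can only get trapped at a still-free hole of $\HoleSet\setminus\{f\}$, so the extraction always succeeds and leaves a balanced remainder avoiding $f$. Your version merely spells out more explicitly the invariant that the paper states in one line (``for all nodes which are not in $\HoleSet$, the number of outgoing edges is $\geq$ the number of incoming edges'').
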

See \Cref{heaps2} for an application of this proposition.
\begin{proof}
The map
  $\phi:(H_1,H_2)\mapsto H_1\oplus H_2$ is an injection from $\mathcal{N}$ to $\mathcal{M}$ since passports are compatible, and as following the idea in the proof of Proposition \ref{pro:heapinj}  (in the case where $|A|=\Nb-1$), it is possible to recover the golf sequence from the concatenation.\\
  Now, $\phi$ is also a  surjection: take an element $H\in \mathcal{M}$ and let us build a pair $(h,h')\in\mathcal{N}$ such that $h\oplus h'=H$.
  For that, we need to show that we can extract from $H$ all the golf trajectories. 
  Consider $S_1$, and follow the path coming out of $S_1$ using the edges given by the collection $H$, as usual. Eventually, this path will reach an element $z$ of $\HoleSet$: indeed, for all nodes which are not in $\HoleSet$, the number of outgoing edges is $\geq$ the number of incoming edges. It is then not possible that the ball arriving at an internal node $u\in t^o$, find an empty heap $H_u$ and is trapped there. \par
  This gives the first golf sequence, and the removal of all the edges used to define it provides an element of $ \bigcup_{n_u'\geq 0\atop{n_f'=0}} \HC\l[(n_u' + \Nb_u -1_{u=S_1}, u \in V),(n_u'+ \1_{u \in \HoleSet\setminus\{f,z\}}), u \in V)\r]$ since the balance at each vertex of this first path is 0, except at the starting and ending position. We are in the same situation as before with one less path in the golf sequence. When all the paths of the golf sequence have been extracted, the possibly remaining heaps are balanced, and still $n_f=0$, so that the remaining steps form a heap of piece of ${\sf HCycles}({\sf free}\ni f)$.
  \end{proof}
\begin{figure}
	\centering
	\includegraphics[scale=0.5]{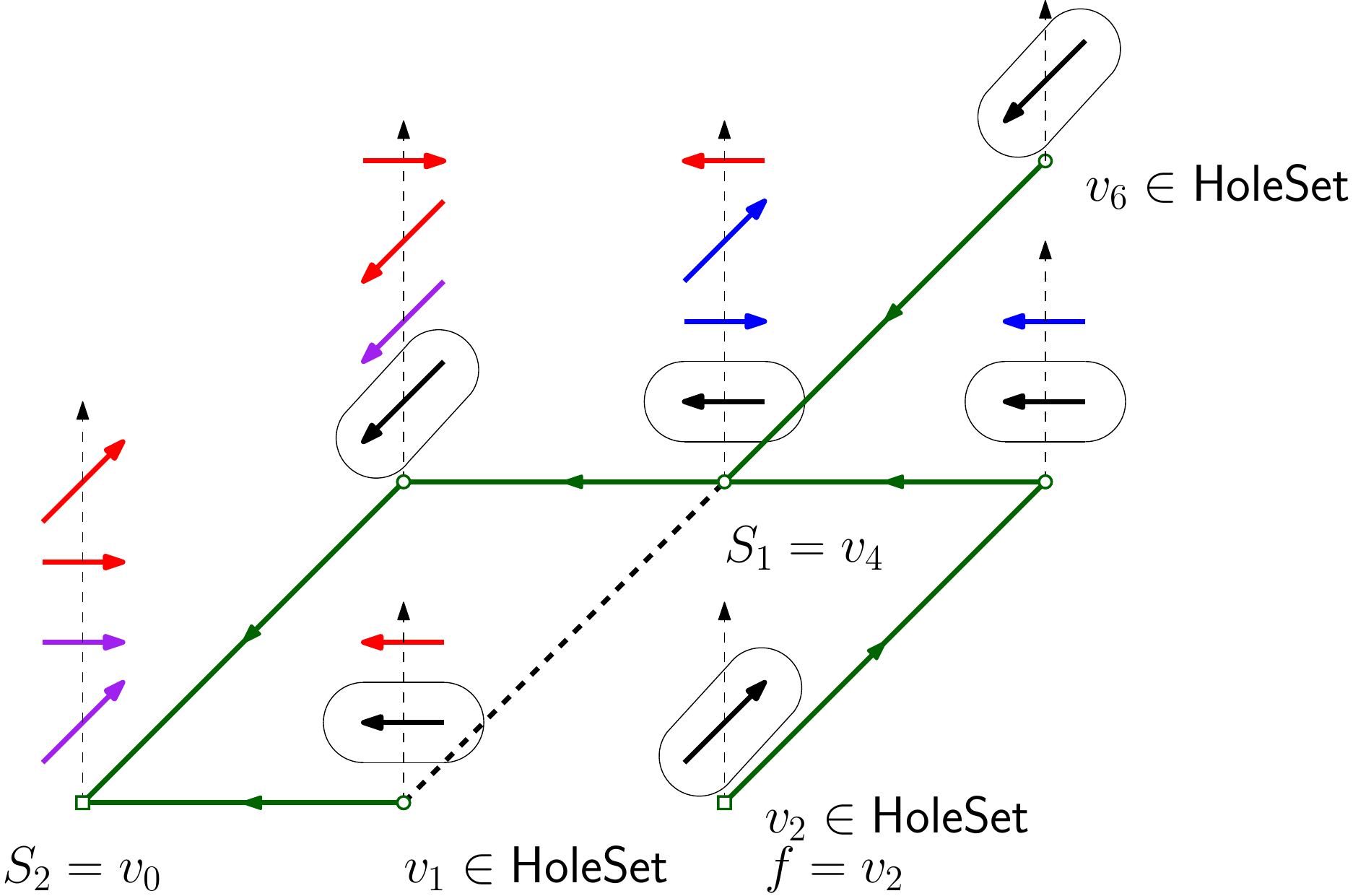}
	\caption{An application of \Cref{pro:qklsd} to our example in \Cref{heaps}. The bijection takes as input a pair of heaps: the heap of cycles is in red, and the golf sequence heap is colored  blue and purple (the blue steps shows the path starting at $S_1= v_4$ and the purple steps the path starting at $S_2=v_0$; in this case $S=(S_1,S_2)=(v_4,v_0)$). The bijection outputs the heap obtained by putting the heap of cycles above the golf sequence heap. The corresponding $\HoleSet$ is $\{v_1,v_2,v_6\}$, and $f=v_2$ (it is in {\sf free}, because it belongs to $\HoleSet$, and there are not incoming edges at $v_2$).
      We kept the steps used to construct the tree, which are circled, in order to show the full decomposition of  \Cref{heaps}.}
	\label{heaps2}
\end{figure}

\subsection{Conclusion: proof of Theorem \ref{theo:kgyqsd}}

From \Cref{pro:qklsd} and Lemma \ref{lem:key}, one identifies that
\[\W(\Xi(t,r,f))= \W\l({\Heap}{\sf GolfSequence}\Big[  \HoleSet, S, {\sf free}\ni f\Big]\r)\W({\sf HCycles}({\sf free}\ni f))\]
where $\HoleSet=\partial t$ and $S$ is  any starting sequence with counting sequence $(\Nb_u,u\in V)=((\deg{(t,r)}{u}-1)\1_{u\in t^o},u \in V)$: in words $\deg{(t,r)}{u}-1$ balls start at the internal node $u$.
Such a starting sequence ($S^{i},1\leq i \leq \Nbb)$ is easy to build: take any order on $t^o$, and sort the nodes $u_1,\cdots,u_{|t^o|}$, and take as sequence $S$: the node $u_1$ repeated $\deg{(t,r)}{u_1}-1$ times, followed by $u_2$ repeated $\deg{(t,r)}{u_2}-1$ times, etc. \\
Recall Corollary \eref{eq:cor9}.
We want to prove that
\[\sum_{f \in \partial t} \rho_f\W(\Xi(t,r,f))=\frac{1}{\sum_{x}\det(\Id-M^{(x)})}.\]
Taking into account that $\rho_f=\frac{\det(\Id-M^{(f)})}{\sum_{x}\det(\Id-M^{(x)})}$ and \eref{eq:qdq2}
we just need to prove that
\[\sum_{f\in \partial t} \W({\Heap}{\sf GolfSequence}\Big[  \HoleSet, S, {\sf free}\ni f])=1\]
but this is precisely what says the  trivial stochastic golf sequence property (\Cref{theo:ts}$(ii)$).
\begin{rem} In Section \ref{sec:TCT} we said that we would use the collection of heaps $\Heap^{\sf Trunc}(w)$ while forgetting the initial order of edges coming from $w$ or $\ra{w}$: we used these collections of heap to define the golf sequences, and the additional heap of cycles, and none of them have been defined using the order of the original steps of $w$ or $\ra{w}$. \par
    However, using portions of $w$ (or $\ra{w}$) to define golf trajectories seem possible but brings many complications:  because somehow, ``the next ball'' to be played depends on the arrival position of the previous. In terms of heaps of cycles everything become messy; for example the global heap of cycles produced  finally in our construction, needs to be partially piled after each golf path, if one wants to use the chunks of consecutive steps of $\ra{w}$ in their initial order. These considerations make this ensemble quite difficult to follow and would produce a much longer and involved argument.
\end{rem}

\begin{rem} In the proof, we have rearranged the steps of the random walk with kernel $\ra{M}$. What we did -- notably around the notion of golf sequences -- is reminiscent of Diaconis \& Fulton paper \cite{DF} and the commutative property they proved (this property being also at the core of the study of internal DLA). When one studies a sequence of stopped random paths, where the stopping time of each path depends on the ending points  of the preceding paths (the starting points being fixed, and possibly different), under some various conditions, the distribution of the set of final points of the trajectories is independent from the order of the sequence of paths and the distribution of the vector giving the total number of traversals of each edge by the sequence of paths, is also independent from the order of sequence of path. 
  \end{rem}

\section{Proof of Theorem \ref{theo:kgyqsd} by adaptation of the classical argument}
\label{sec:qdqsd}
The proof follows mainly, Aldous--Broder's ideas, except that $M=\ra{M}$ is not an hypothesis; we proceed here and there to some small changes with respect to classical arguments.

\paragraph{The $\LET$.}
Any path $(z_0,\cdots,z_n)$ on $G$ can be used to define a rooted tree
\[(\Gamma_n,z_n):=\LET(z_0,\cdots,z_n),\]
rooted at $z_n$, as follows: set $(\Gamma_0,z_0)$ as the tree reduced to its root $z_0$; from $k=0$ to ${n-1}$, construct $(\Gamma_{k+1},z_{k+1})$ from $(\Gamma_{k},z_k)$ by the addition of the edge $(z_k,z_{k+1})$ and the suppression of the outgoing edge from $z_{k+1}$ (if any). The set of nodes of $(\Gamma_n,z_n)$ is $\{z_0,\cdots,z_n\}$. For any $k\in \{1,\cdots,|\{z_0,\cdots,z_n\}|\}$, denote by
	\[\nu_k=\max\{j : |\{z_j,\cdots,z_n\}|=k\},\]
the last time $k$ nodes are to be visited ``in the future'', which is also a last visit time for a node. It follows that the tree $\LET(z_0,\cdots,z_n)$ has root $z_n$ and has for edges 
\be
(z_{\nu_k}, z_{\nu_k+1}), \textrm{ for } k=|\{z_0,\cdots,z_n\}| \textrm{ to }2.
\ee

In  Definition \ref{defi:dqsd}, we defined the $\FET$ associated with a covering path; this definition can be extended to any path, covering or not, killed at the covering time or not.
The following lemma is proved by a straightforward checking:
\begin{lem}\label{lem:qsdqs} For any path $(w_0,\dots, w_n)$ on $G$: 
  \ben\label{eq:qsdqs}
  \FET(w_0,\cdots,w_n)=\LET(w_n,\cdots,w_0).
  \een
\end{lem}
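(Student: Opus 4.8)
The plan is to make the time reversal completely explicit and then match edge sets term by term. Write $z_i := w_{n-i}$ for $0\le i\le n$, so that $(z_0,\dots,z_n)=(w_n,\dots,w_0)$ is exactly the path appearing on the right-hand side of \eref{eq:qsdqs}. With this notation both trees are rooted at the same vertex, since $\FET(w_0,\dots,w_n)$ is rooted at $w_0$ and $\LET(z_0,\dots,z_n)$ is rooted at $z_n=w_0$. Because the excerpt already records that $\LET(z_0,\dots,z_n)$ has edges $(z_{\nu_k},z_{\nu_k+1})$ for $2\le k\le N$, where $\nu_k=\max\{j:|\{z_j,\dots,z_n\}|=k\}$ and $N:=|\{w_0,\dots,w_n\}|$, I may work directly with this characterization rather than unwinding the iterative root-moving construction of the $\LET$.

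The key step is a single index identity. Reversing indices gives $\{z_j,\dots,z_n\}=\{w_{n-j},\dots,w_0\}=\{w_0,\dots,w_{n-j}\}$, so $|\{z_j,\dots,z_n\}|=|\{w_0,\dots,w_{n-j}\}|$. Substituting $i=n-j$ converts the maximum over $j$ into a minimum over $i$, which yields
\[
\nu_k = n-\min\{i:|\{w_0,\dots,w_i\}|=k\} = n-\tau_k .
\]
As a sanity check I would verify the boundary value $k=1$: there $\tau_1=0$ and $\nu_1=n$, consistent with the fact that $w_0=z_n$ is the common root and contributes no edge, so that both edge families are genuinely indexed by $k=2,\dots,N$.

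Plugging $\nu_k=n-\tau_k$ into the $\LET$ edge gives $z_{\nu_k}=w_{n-\nu_k}=w_{\tau_k}$ and $z_{\nu_k+1}=w_{n-\nu_k-1}=w_{\tau_k-1}$, so the $\LET$ edge $(z_{\nu_k},z_{\nu_k+1})$ is literally the $\FET$ edge $(w_{\tau_k},w_{\tau_k-1})$ of Definition \ref{defi:dqsd}. Since $k$ ranges over the same set $2\le k\le N$ in both constructions, the two edge sets coincide; and because in both trees the first coordinate of an edge is the child and the second its parent (both oriented toward the common root $w_0=z_n$), the orientations agree as well. This establishes \eref{eq:qsdqs}. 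There is no substantial obstacle here beyond keeping the two conventions aligned — the $\inf$ defining $\tau_k$ versus the $\max$ defining $\nu_k$, the reversal $i\leftrightarrow n-i$, and the child/parent orientation — and the identity $\nu_k=n-\tau_k$ is precisely what makes the ``straightforward checking'' go through.
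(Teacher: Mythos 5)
Your proof is correct and is essentially the ``straightforward checking'' the paper alludes to: the identity $\nu_k=n-\tau_k$ under the reversal $z_i=w_{n-i}$ immediately matches the $\LET$ edges $(z_{\nu_k},z_{\nu_k+1})$ with the $\FET$ edges $(w_{\tau_k},w_{\tau_k-1})$, with common root $w_0=z_n$ and consistent orientation toward that root. Nothing further is needed.
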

Several remarks follow:\\
$\bullet$ The sequence of trees $\l( \FET(w_0,\cdots,w_k) ,0 \leq k \leq n\r)$ is non decreasing, and the jumps of this sequence correspond to the $\tau_m=\inf\{j~: |\{w_0,\cdots,w_j\}|=m\}$ where a new node is attached to the tree. Moreover, for a Markov chain $(W_i,i\geq 0)$,
\[ \FET(W_i, i\geq 0) = \FET(W_i, 0\leq i \leq \tau_{|V|}).\]
$\bullet$ The right hand side in \eref{eq:qsdqs} is somehow not natural since we reverse the time (time is decreasing). 
The following Lemma is a direct consequence of the previous one
\begin{lem}\label{lem:qdsdq}  Assume that $(X_k, k\geq 0)$ and $(Y_k, k \leq 0)$ are two processes such that,
  \ben\label{eq:inversion}
  (X_0,X_1,\cdots)\eqd (Y_{0},Y_{-1},\cdots).
  \een
  For
  \be
  \tau_m&=&\inf\{j: |\{X_0,X_1,\cdots,X_j\}|=m\},\\
  L_m &=& \inf\{j:|\{Y_0,Y_{-1},\cdots,Y_{-j}\}|=m\},
  \ee
  we have
  \ben\label{eq:retournement}
(X_0,\cdots,X_{\tau_m})\eqd (Y_{0},\cdots,Y_{-L_m})
\een
and then
\ben\label{eq:qdsa1}
\FET(X_0,\cdots,X_{\tau_m})= \LET(X_{\tau_m},\cdots,X_0)\eqd \LET(Y_{-L_m},\cdots,Y_0).\een
\end{lem}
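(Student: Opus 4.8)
The plan is to deduce Lemma~\ref{lem:qdsdq} directly from Lemma~\ref{lem:qsdqs} together with the distributional hypothesis \eref{eq:inversion}, with essentially no new combinatorial content: the work is to keep track of which maps are deterministic and which identities are distributional. First I would record the key observation that $\FET$, $\LET$, and all the hitting times $\tau_m$, $L_m$ are \emph{deterministic measurable functionals} of a path. Concretely, $\tau_m$ depends only on the finite prefix $(X_0,\cdots,X_j)$ (it is a hitting time of the set ``$m$ distinct values seen''), and likewise $L_m$ depends only on $(Y_0,Y_{-1},\cdots,Y_{-j})$ read in the direction of decreasing index. Since these are the \emph{same} functional applied to the two sequences, the hypothesis \eref{eq:inversion} that $(X_0,X_1,\cdots)\eqd(Y_0,Y_{-1},\cdots)$ transfers through them.

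Next I would establish \eref{eq:retournement}. The cleanest way is to view both sides as images of the input sequence under a single deterministic map. Define $\Phi$ on a one-sided sequence $(a_0,a_1,\cdots)$ by first computing $\tau_m(a)=\inf\{j:|\{a_0,\cdots,a_j\}|=m\}$ and then returning the finite truncation $(a_0,\cdots,a_{\tau_m})$. Applying $\Phi$ to $(X_0,X_1,\cdots)$ yields $(X_0,\cdots,X_{\tau_m})$ by definition; applying the \emph{same} $\Phi$ to $(Y_0,Y_{-1},\cdots)$ yields $(Y_0,\cdots,Y_{-L_m})$, because when the sequence $(Y_0,Y_{-1},\cdots)$ is read in this order the quantity $\inf\{j:|\{Y_0,Y_{-1},\cdots,Y_{-j}\}|=m\}$ is exactly $L_m$. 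Since $\Phi$ is a fixed measurable map and the two input sequences are equal in distribution by \eref{eq:inversion}, their images are equal in distribution, which is precisely \eref{eq:retournement}.

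Finally I would obtain \eref{eq:qdsa1} by composing with $\LET$ (again a deterministic functional of a finite path) and invoking Lemma~\ref{lem:qsdqs}. The first equality $\FET(X_0,\cdots,X_{\tau_m})=\LET(X_{\tau_m},\cdots,X_0)$ is just \eref{eq:qsdqs} applied pathwise to the (random) path $(X_0,\cdots,X_{\tau_m})$, hence holds almost surely and in particular in distribution. For the distributional identity $\LET(X_{\tau_m},\cdots,X_0)\eqd\LET(Y_{-L_m},\cdots,Y_0)$, note that $(X_{\tau_m},\cdots,X_0)$ is the time-reversal of $(X_0,\cdots,X_{\tau_m})$ and $(Y_{-L_m},\cdots,Y_0)$ is the time-reversal of $(Y_0,\cdots,Y_{-L_m})$; since time-reversal of a finite path is a deterministic map and the two unreversed paths are equal in distribution by \eref{eq:retournement}, applying $\LET$ to the reversals preserves this equality in distribution.

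I do not expect a genuine obstacle here, since the statement is labelled ``a direct consequence'' — the lemma is really a bookkeeping exercise. The one point requiring care, and the place I would be most careful in writing, is the bookkeeping of indexing directions: one must verify that the stopping index $L_m$ defined on the two-sided process $Y$ matches $\tau_m$ computed on the reindexed one-sided sequence $(Y_0,Y_{-1},\cdots)$, so that the \emph{same} functional $\Phi$ genuinely produces both truncations. Getting the correspondence between ``$j$'' in $L_m$ and the position $-j$ in $Y$ exactly right is the only step where an off-by-one or a reversed-direction slip could creep in; everything else is the principle that deterministic measurable maps preserve equality in distribution.
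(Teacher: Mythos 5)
Your proposal is correct and matches the paper's (implicit) argument: the paper offers no written proof, stating only that the lemma is ``a direct consequence'' of Lemma~\ref{lem:qsdqs}, and the intended reasoning is exactly your bookkeeping --- the truncation at $\tau_m$, respectively $L_m$, is the same deterministic functional applied to the two sequences identified in distribution by \eref{eq:inversion}, and then one composes with the deterministic maps of time-reversal and $\LET$, using \eref{eq:qsdqs} pathwise for the first equality in \eref{eq:qdsa1}.
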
The equality in law \eref{eq:inversion} is crucial: assume that $(X_k, k \geq 0)$ is a Markov chain with kernel $M$; in this case the initial value $X_0$, play a role for $(X_k, k\geq 0)$, which is really different from the final value $Y_0$ in $(Y_{-k}, Y_{-k+1},Y_{-k+2},\cdots, Y_0)$. Requiring \eref{eq:inversion} for Markov chains can be done in two natural ways:
\begin{lem}\label{lem:dqsdd}
  $(a)$ If $(X_k,k\in \Z)$ and $(Y_k,k\in \Z)$ are two Markov chains with respective positive Markov kernels $M$ and $\ra{M}$ on $G$, with invariant distribution $\rho$. Assume that both $(X_k,k\in \Z)$ and $(Y_k,k\in \Z)$ are taken under their invariant distributions, then
  \be
  (X_0,X_1,X_2,\cdots) \eqd (Y_0,Y_{-1},Y_{-2},\cdots )
  \ee
  $(b)$ If $M$ is reversible (that is $M=\ra{M}$) then, if  $(X_k,k\in \Z)$ and $(Y_k,k\in \Z)$ are two Markov chains with respective Markov kernel $M$, then for any $x_0$,
  \be
  {\cal L}\l[(X_0,X_1,X_2,\cdots)~|~X_0=x_0\r] = {\cal L}\l[ (Y_0,Y_{-1},Y_{-2},\cdots )~|~Y_0=x_0\r]
  \ee
(so that, somehow, the situation of $(a)$ occurs, conditionally, ``outside the stationary regime'').
   \end{lem}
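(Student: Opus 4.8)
The plan is to prove Lemma \ref{lem:dqsdd} directly from the defining properties of stationarity and of the time-reversal kernel $\ra{M}$, reducing the claimed equalities in law to an equality of finite-dimensional marginals and then invoking the Kolmogorov extension theorem (or simply checking cylinder probabilities) to lift this to the full trajectories. Both parts $(a)$ and $(b)$ amount to computing the joint law of finitely many consecutive coordinates of the left-hand process and matching it to the corresponding marginal of the right-hand process; since both sides are Markov, it suffices to match all finite cylinder probabilities.

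For part $(a)$, I would fix an integer $n\geq 0$ and a sequence of states $x_0,\dots,x_n$ and compute
\[
\P(X_0=x_0,X_1=x_1,\dots,X_n=x_n)=\rho_{x_0}\prod_{i=0}^{n-1}M_{x_i,x_{i+1}},
\]
using that $X$ is stationary with invariant law $\rho$. On the other side, using that $Y$ is a stationary Markov chain with kernel $\ra{M}$ and invariant law $\rho$, I would compute
\[
\P(Y_0=x_0,Y_{-1}=x_1,\dots,Y_{-n}=x_n)=\rho_{x_n}\prod_{i=0}^{n-1}\ra{M}_{x_{i+1},x_i}.
\]
The key step is then to substitute the definition \eref{eq:dqsd22}, namely $\ra{M}_{x_{i+1},x_i}=\rho_{x_i}M_{x_i,x_{i+1}}/\rho_{x_{i+1}}$, and observe that the product telescopes: the factors $\rho_{x_1},\dots,\rho_{x_{n-1}}$ cancel between numerator and denominator, leaving $\rho_{x_n}\cdot(\rho_{x_0}/\rho_{x_n})\prod_i M_{x_i,x_{i+1}}=\rho_{x_0}\prod_i M_{x_i,x_{i+1}}$, which matches the left-hand side exactly. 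Since this holds for every cylinder, the two processes have the same law.

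For part $(b)$, the computation is the same but conditioned on the initial value. With $M=\ra{M}$ reversibility gives $\rho_{x_0}M_{x_0,x_1}=\rho_{x_1}M_{x_1,x_0}$, hence $M_{x_1,x_0}=\rho_{x_0}M_{x_0,x_1}/\rho_{x_1}$, so the telescoping argument of part $(a)$ now equates the two \emph{joint} laws; dividing both sides by their common marginal at the fixed starting state $x_0$ yields the equality of the conditional laws claimed in $(b)$. Here one should note that on the right-hand side ``conditioning on $Y_0=x_0$'' plays the role of conditioning on the \emph{terminal} state of the time-reversed walk, which is consistent with reversibility precisely because $M=\ra{M}$.

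The only genuine obstacle is conceptual rather than computational: one must be careful that the right-hand processes $(Y_0,Y_{-1},Y_{-2},\dots)$ are read with \emph{decreasing} time index, so that the transition from $Y_{-i}$ to $Y_{-(i+1)}$ is governed by the kernel $\ra{M}$ applied in the direction of increasing $i$; getting the orientation of the kernel right is exactly what makes the telescoping work and is where an off-by-one or transposed-index slip would break the proof. Once the marginals are correctly set up, the result is a direct consequence of \eref{eq:dqsd22} together with stationarity, so I expect the proof to be short and the paper's remark that it is ``a direct consequence of the previous [lemma]'' applies here only to Lemma \ref{lem:qdsdq}; Lemma \ref{lem:dqsdd} itself is this elementary stationarity computation.
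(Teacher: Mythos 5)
Your proof is correct: the paper states this lemma without proof (treating it as the classical fact that the time reversal of a stationary Markov chain with kernel $M$ is a stationary Markov chain with kernel $\ra{M}$), and your cylinder-set computation with the telescoping product of the $\rho_{x_i}$ via \eref{eq:dqsd22} is exactly the standard argument one would supply. Both parts check out, including the correct orientation of the kernel along the decreasing time index.
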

  Aldous--Broder argument relies on $(b)$ while $(a)$ seems to us more natural and leads to Theorem \ref{theo:kgyqsd}. The following Lemma is not stated by Aldous \cite{Al90} or Broder \cite{Bro89}, but it is a consequence of their proofs.
 \begin{lem}\label{lem:shds} Assume $(Y_{k, k\in \Z})$ is a Markov chain with kernel $\ra{M}$ under its stationary regime, then for every $(t,r)\in \SP^{\bullet}(G)$ one has
 \begin{align}
 \P({\LET}(Y_k,k\leq 0)=(t,r))= {\sf Const}.\prod_{e\in E(t,r)} \ra{M}_e.\label{eq:belle}
 \end{align}
\end{lem}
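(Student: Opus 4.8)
The goal is to compute the stationary probability that the Last-Exit Tree of a reverse-time walk with kernel $\ra{M}$ equals a fixed rooted tree $(t,r)$. The plan is to set up a one-step recursion in the length of the observed past and to identify the resulting balance equations with the invariant-distribution characterization of $\rho$, rather than to sum over all paths directly. The key observation is that the \LET{} construction is local in a useful way: extending the reverse-time walk by one earlier step $Y_{-n-1}\to Y_{-n}$ modifies the current tree by adding the edge $(Y_{-n-1},Y_{-n})$ and deleting the unique outgoing edge at $Y_{-n-1}$ (if present). This is exactly the defining move of the \LET{}, run now on the time-reversed sequence.

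\textbf{Key steps.}
First I would fix $(t,r)$ and consider, for each vertex $x$, the stationary probability $\pi_n(x)$ that the tree built from the last $n$ reverse-time steps equals $(t,r)$ \emph{and} the current position $Y_{-n}$ (the ``active'' endpoint from which the next earlier step departs) equals $x$. Because $(Y_k)$ is stationary with kernel $\ra{M}$, the probability that the next earlier transition is $x\to y$ is governed by $\ra{M}_{x,y}$, and summing these transitions yields a linear recursion relating the $\pi_n$'s. The second step is to take $n\to\infty$ (equivalently, to observe the whole infinite past, which determines the tree once every vertex has been visited); by irreducibility the walk visits all of $V$ almost surely, so $\LET(Y_k,k\le 0)$ is a.s.\ a genuine spanning tree and the limiting masses $\pi(x):=\lim_n \pi_n(x)$ exist and satisfy a fixed-point (balance) system. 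The third step is to solve that system: I expect the balance equations to force $\pi(x)$ to be proportional to $\rho_x$ times the product $\prod_{e\in E(t,r)}\ra{M}_e$ restricted appropriately, and then summing over the admissible active endpoint $x$ and using stationarity $\P(Y_0=r)=\rho_r$ to collapse the $\rho$ factors, leaving exactly ${\sf Const}.\prod_{e\in E(t,r)}\ra{M}_e$. A cleaner variant, which I would actually carry out, bypasses the recursion: invoke \Cref{lem:qdsdd}, which already gives $\FET(X_0,\cdots,X_{\tau_m})\eqd\LET(Y_{-L_m},\cdots,Y_0)$, so that the distribution of $\LET(Y_k,k\le 0)$ under stationarity is \emph{identical} to that of $\FET(X_0,\cdots,X_{\tau_{|V|}})$ when $(X_k)$ has kernel $M$ started under $\rho$; the stated formula is then exactly \eref{eq:qsdq3b}, whose equivalence to the conditional statement \eref{eq:qsdq33} was already noted.

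\textbf{The main obstacle.}
The delicate point is justifying the passage to the infinite past and the exchange of ``build the tree from a finite window'' with ``build it from the whole trajectory.'' One must check that for a stationary chain the last-exit times $\nu_k$ are a.s.\ finite (i.e.\ every vertex has a well-defined last visit when looking backward from time $0$), which is where irreducibility and recurrence of the finite-state chain are used; only then is $\LET(Y_k,k\le 0)$ well-defined as a spanning tree. Assuming the equality-in-law route of \Cref{lem:qdsdd}, the remaining obstacle is purely bookkeeping: matching the edge set of the \LET{} to the $\FET{}$ edge set via \Cref{lem:qsdqs} and confirming that the time reversal carries the kernel $M$ dynamics of $(X_k)$ precisely onto the $\ra{M}$ dynamics of $(Y_k)$ under the common invariant law $\rho$, which is the content of \eref{eq:dqsd22}. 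I expect no serious analytic difficulty beyond these a.s.-finiteness and measurability checks, since the combinatorial identification is exactly the earlier lemmas applied in the stationary regime.
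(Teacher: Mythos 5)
Your ``cleaner variant'', which is the route you say you would actually carry out, is circular: within Section \ref{sec:qdqsd} the whole purpose of Lemma \ref{lem:shds} is to \emph{produce} the identity \eref{eq:qsdq3b}. The paper's chain of implications runs Lemma \ref{lem:shds} $\Rightarrow$ $\P(\FET(X_i,0\le i\le \tau_{|V|})=(t,r))={\sf Const}.\prod_{e}\ra{M}_e$ $\Rightarrow$ Theorem \ref{theo:kgyqsd}; you invoke \eref{eq:qsdq3b} (equivalently the theorem) as an input, so nothing is proved. The only non-circular content left is your recursion route, and its setup is off in two ways. First, the update rule is wrong: prepending an earlier step $Y_{-n-1}\to Y_{-n}$ to the observation window either adds the edge $(Y_{-n-1},Y_{-n})$ when $Y_{-n-1}$ has not yet appeared in the window, or leaves the tree \emph{unchanged} when it has (the last exit from $Y_{-n-1}$ remains the later one); it never ``deletes the unique outgoing edge at $Y_{-n-1}$''. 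The add-and-delete move is the \emph{forward} $\LET$ move, and there the deletion occurs at the target of the new edge, not at its source. Second, conditionally on $Y_{-n}=x$ the earlier position $Y_{-n-1}$ is distributed according to the reversal of $\ra{M}$, i.e.\ according to $M$, not $\ra{M}$, so your recursion for $\pi_n(x)$ would mix $M$-transition probabilities with $\ra{M}$-edge weights; and you never write down, let alone solve, the resulting balance system --- you only ``expect'' it to give the answer.

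The paper's actual proof works with the \emph{forward} tree process $m\mapsto \LET(Y_k,k\le m)$, which is a genuine Markov chain on $\SP^{\bullet}(G)$: the root of the current tree is the current position $Y_m$, so the transition $(t,r)\to(t',r')$ has probability $Q((t,r),(t',r'))=\ra{M}_{r,r'}$ when $t'$ is obtained from $t$ by adding $(r,r')$ and deleting the outgoing edge of $r'$. Since this chain runs from $-\infty$, at time $0$ it is under its stationary law, and the proof consists of the explicit verification that $P(t,r)={\sf Const}.\prod_{e\in E(t,r)}\ra{M}_e$ solves the balance equation: the predecessors $(t,r)$ of $(t',r')$ are parameterized by the neighbours $u$ of $r'$ carrying its outgoing edge in $t$, whence
\[
\sum_{(t,r)} P(t,r)\,Q((t,r),(t',r'))=\Bigl(\sum_{u:\{u,r'\}\in E}\ra{M}_{r',u}\Bigr)\prod_{e\in E(t',r')}\ra{M}_e=\prod_{e\in E(t',r')}\ra{M}_e,
\]
together with an irreducibility argument (the $\LET$ of the depth-first traversal of a tree is that tree) to guarantee uniqueness of the stationary distribution. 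This computation is the mathematical content of the lemma and is absent from your proposal.
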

Before proving the Lemma, let us discuss a bit the consequences of the two last lemmas~:\\
\bls As discussed in Aldous \cite{Al90} and Broder \cite{Bro89}, when $\ra{M}=M$ (reversible case) and $r$ fixed, 
\[{\cal L}((X_0,\cdots,X_{\tau_m})~|~X_0=r)= {\cal L}((Y_{0},\cdots,Y_{-L_m})~|~Y_0=r)\]
so that by Lemma \ref{lem:qdsdq}, 
\be\label{eq:rgsf}
\P(\FET(X_i,0\leq i \leq \tau_{|V|})=(t,r)~|~X_0=r) &=&\P({\LET}(Y_{\leq 0})=(t,r)|Y_0=r)\\ &=& {\sf Const}.1_{x_0=r}\Bigl(\prod_{e\in E(t,r)} M_e\Bigl)/\rho(r).
\ee
\bls If we do not assume $\ra{M}=M$, then for $X$ a Markov chain with kernel $M$, starting under its invariant distribution, we have
\be\label{eq:rfze}
\P(\FET(X_i,0\leq i \leq \tau_{|V|})=(t,r)) &=&\P({\LET}(Y_{\leq 0})=(t,r))\\
&=& {\sf Const}.\prod_{e\in E(t,r)} \ra{M}_e,
\ee
and conditional on $X_0=r$ in the left hand side and $Y_0=r$ in the right hand side, it gives Theorem \ref{theo:kgyqsd}.
\begin{proof}
	{\Cref{eq:belle} comes from the fact that one can view the sequence $({\LET}(Y_k,k\leq m), m \geq 0)$ as a Markov chain with state space $\SP^{\bullet}(G)$ running from $-\infty$\footnote{When the chain is not aperiodic, it has to be seen as the limit when $m\to +\infty$ of a Markov chain starting at $-m$ under its invariant distribution} and therefore under its stationary regime.
	The kernel $Q((t,r),(t',r'))$ of this chain provides the probability to pass from $(t,r)$ to $(t',r')$; this weight is $\ra{M}_{r,r'}$ or 0 depending on whether the root displacement is enough to transform $t$ into $t'$ or not (together with the removal of the outgoing edge from $r'$ in $t$).
	The trees $(t,r)$ which may lead to $(t',r')$ do not have the (oriented) edge $(r,r')$ but an edge from $r'$ to one of the neighbors $u$ of $r'$ in $G$; this is the only difference between $t$ and $t'$.
		The trees $(t,r)$ having $(t',r')$ as neighbour (see Fig . \ref{fig:NN}) are then parameterized by these neighbours $u$ of $r'$, and the balance equation (if one assumes $P(t,r)=\prod_{e \in E(t,r)}\ra{M}_e$), 
		\[\sum_{\stackrel{(t,r): (t',r')}{\textrm{ is a neighbour}}} P((t,r))Q((t,r),(t',r'))= \l(\sum_{u \textrm{ neighbour of  }r'} \ra{M}_{r',u} \frac{\l[\prod_{e \in E(t',r')} \ra{M}_e\r]}{\ra{M}_{r,r'}} \r) \ra{M}_{r,r'}=\prod_{e \in E(t',r')} \ra{M}_e \]
		is satisfied.\\ As mentioned by Aldous \cite{Al90}, the irreducibility follows the fact that the $\LET$ of the sequence of steps that forms the depth first traversal of a tree is the tree itself.}
\end{proof}
\begin{figure}[htbp]
  \centerline{\includegraphics[scale=0.8]{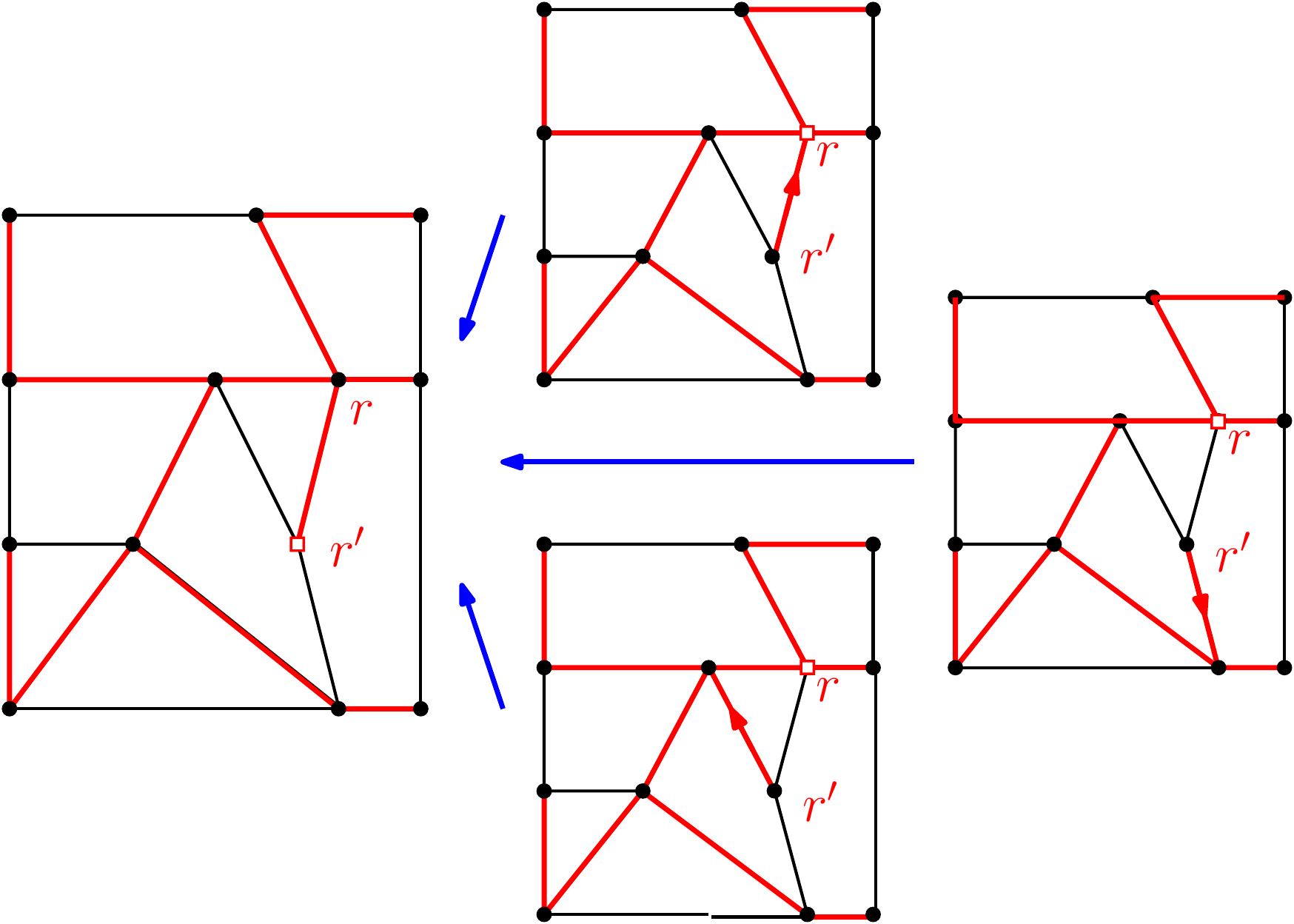}}
  \caption{\label{fig:NN} The antecedent of a given tree for the tree Markov chain.}
\end{figure}

\begin{rem} In Hu \& al. \cite{HLT}, it is established that the first entrance tree and the last exit tree taken at the cover time of the Markov chain, have same distribution.
    \end{rem}

\subsection*{Acknowledgments}
    We	thank the anonymous	referee for her/his corrections and comments.
\small

\bibliographystyle{abbrv}


\end{document}